\documentclass[12pt]{article}
\usepackage{amsmath,latexsym,amssymb}
\usepackage{enumerate}
\usepackage{amsthm}
\usepackage{epsfig,graphicx}

\setlength{\oddsidemargin}{0.25in}
\setlength{\evensidemargin}{0.25in}
\setlength{\textwidth}{5.95in}
\setlength{\topmargin}{0in}
\setlength{\textheight}{8.0in}

\newcommand\NN{\mathbb{N}}
\newcommand\RR{\mathbb{R}}
\newcommand\ZZ{\mathbb{Z}}

\newcommand\BB{\mathcal{B}}

\newcommand\GG{\mathcal{G}}
\newcommand\HH{\mathcal{H}}

\newcommand\TT{\mathcal{T}}
\newcommand\eps{{\varepsilon}}

\DeclareMathOperator\dist{dist}

\DeclareMathOperator\diam{diam}
\DeclareMathOperator\defin{def}
\DeclareMathOperator\height{height}
\renewcommand{\mod}{\operatorname{mod}}

\newtheorem{theorem}{Theorem}[section]
\newtheorem{definition}[theorem]{Definition}
\newtheorem{corollary}[theorem]{Corollary}

\newtheorem{lemma}[theorem]{Lemma}

\newtheorem{proposition}[theorem]{Proposition}

\newcommand{\address}{Address: Department of Mathematics, University of North Texas, 1155 Union Circle \#311430, Denton, TX 76203-5017, USA; E-mail: allaart@unt.edu}

\title{Level sets of signed Takagi functions}
\author{Pieter C. Allaart \footnote{\address}}

\begin{document}

\maketitle

\begin{abstract}

This paper examines level sets of functions of the form
\begin{equation*}
f(x)=\sum_{n=0}^\infty \frac{r_n}{2^n}\phi(2^n x),
\end{equation*}
where $\phi(x)=\dist(x,\ZZ)$, the distance from $x$ to the nearest integer, and $r_n=\pm\,1$ for each $n$. Such functions are referred to as {\em signed Takagi functions}. The case when $r_n=1$ for all $n$ is the classical Takagi function, a well-known example of a continuous but nowhere differentiable function. For $f$ of the above form, the maximum and minimum values of $f$ are expressed in terms of the sequence $\{r_n\}$. It is then shown that almost all level sets of $f$ are finite (with respect to Lebesgue measure on the range of $f$), but the set of ordinates $y$ with an uncountable level set is residual in the range of $f$. The concept of a local level set of the Takagi function, due to Lagarias and Maddock, is extended to arbitrary signed Takagi functions. It is shown that the average number of local level sets contained in a level set of $f$ is the reciprocal of the height of the graph of $f$, and consequently, this average lies between $3/2$ and $2$. These results generalize recent findings by Buczolich [{\em Acta Math. Hungar.} {121} (2008), 371--393], Lagarias and Maddock [{\em Monatsh. Math.} {166} (2012), 201--238], and Allaart [{\em Monatsh. Math.} {167} (2012), 311-331].

\bigskip
{\it AMS 2000 subject classification}: 26A27 (primary)

\bigskip
{\it Key words and phrases}: Takagi function, Signed Takagi function, Nowhere-differentiable function, Level set, Local level set, Baire category, Catalan number
\end{abstract}

\section{Introduction}

Takagi's continuous nowhere differentiable function is defined by
\begin{equation}
T(x):=\sum_{n=0}^\infty \frac{1}{2^n}\phi(2^n x),
\label{eq:Takagi-def}
\end{equation}
where $\phi(x)=\dist(x,\ZZ)$, the distance from $x$ to the nearest integer. It was introduced in 1903 by Takagi \cite{Takagi} and rediscovered later by Van der Waerden \cite{vdW} and Hildebrandt \cite{Hildebrandt}, among others.
Much has been written about this function, especially in the last few decades. The reader is referred to recent surveys by Allaart and Kawamura \cite{AK} and Lagarias \cite{Lagarias} for various properties and applications of the Takagi function. Recent research has focused mainly on the level sets $L_T(y):=\{x\in[0,1]:T(x)=y\}$. Many authors (e.g. Kahane \cite{Kahane}, Baba \cite{Baba}) have pointed out that the maximum value of $T$ is $2/3$, and $L_T(2/3)$ is a Cantor set of Hausdorff dimension $1/2$. Recently, De Amo et al. \cite{ABDF} proved that $1/2$ is in fact the largest dimension of any level set of $T$, improving on a result of Maddock \cite{Maddock}. The cardinalities of the level sets of $T$ have also been investigated. Buczolich \cite{Buczolich} showed that $L_T(y)$ is finite for Lebesgue-almost every $y$. The present author studied the level sets further and proved, among other things, that every even positive integer occurs as the cardinality of some level set \cite{Allaart2}, and that the set of ordinates $y$ such that $L_T(y)$ is infinite is residual in the range of $T$ \cite{Allaart3}. Another result along these lines, due to Lagarias and Maddock \cite{LagMad2}, is that the set of ordinates $y$ for which $L_T(y)$ has strictly positive Hausdorff dimension is of full Hausdorff dimension 1.

Lagarias and Maddock \cite{LagMad1,LagMad2} also introduced the interesting concept of a {\em local level set} of the Takagi function. Local level sets are subsets of (global) level sets whose members are obtained from one another by simple combinatorial operations on their binary expansions. Local level sets have a particularly simple structure: they are either finite or Cantor sets. In \cite{LagMad1}, it is shown that the average number of local level sets contained in a level set of $T$ is $3/2$. On the other hand, there is a residual set of ordinates $y$ for which $L_T(y)$ contains infinitely many local level sets, and an uncountable dense set of ordinates $y$ for which $L_T(y)$ contains {\em uncountably} many local level sets; see \cite[Theorem 5.2]{Allaart3}.

This paper generalizes some of the above-mentioned results to an infinite class of functions obtained by multiplying the summands in \eqref{eq:Takagi-def} by arbitrary signs. Specifically, let ${\bf r}=(r_0,r_1,\dots)\in\{-1,1\}^{\NN}$, and define
\begin{equation}
f(x):=f_{\bf r}(x):=\sum_{n=0}^\infty \frac{r_n}{2^n}\phi(2^n x).
\label{eq:general-function}
\end{equation}
We denote the collection of all such functions $f$ by $\TT_{\pm}$, and call them {\em signed Takagi functions}. Two examples are shown in Figure \ref{fig:general-examples}. Like the Takagi function, each member of $\TT_{\pm}$ is clearly $1$-periodic and continuous. That it is also nowhere differentiable follows from Theorem 2 of K\^ono \cite{Kono} or by a straightforward modification of Billingsley's argument \cite{Billingsley}. Let 
$$L_f(y):=\{x\in[0,1]:f(x)=y\}, \qquad y\in\RR.$$

\begin{figure}
\begin{center}
\epsfig{file=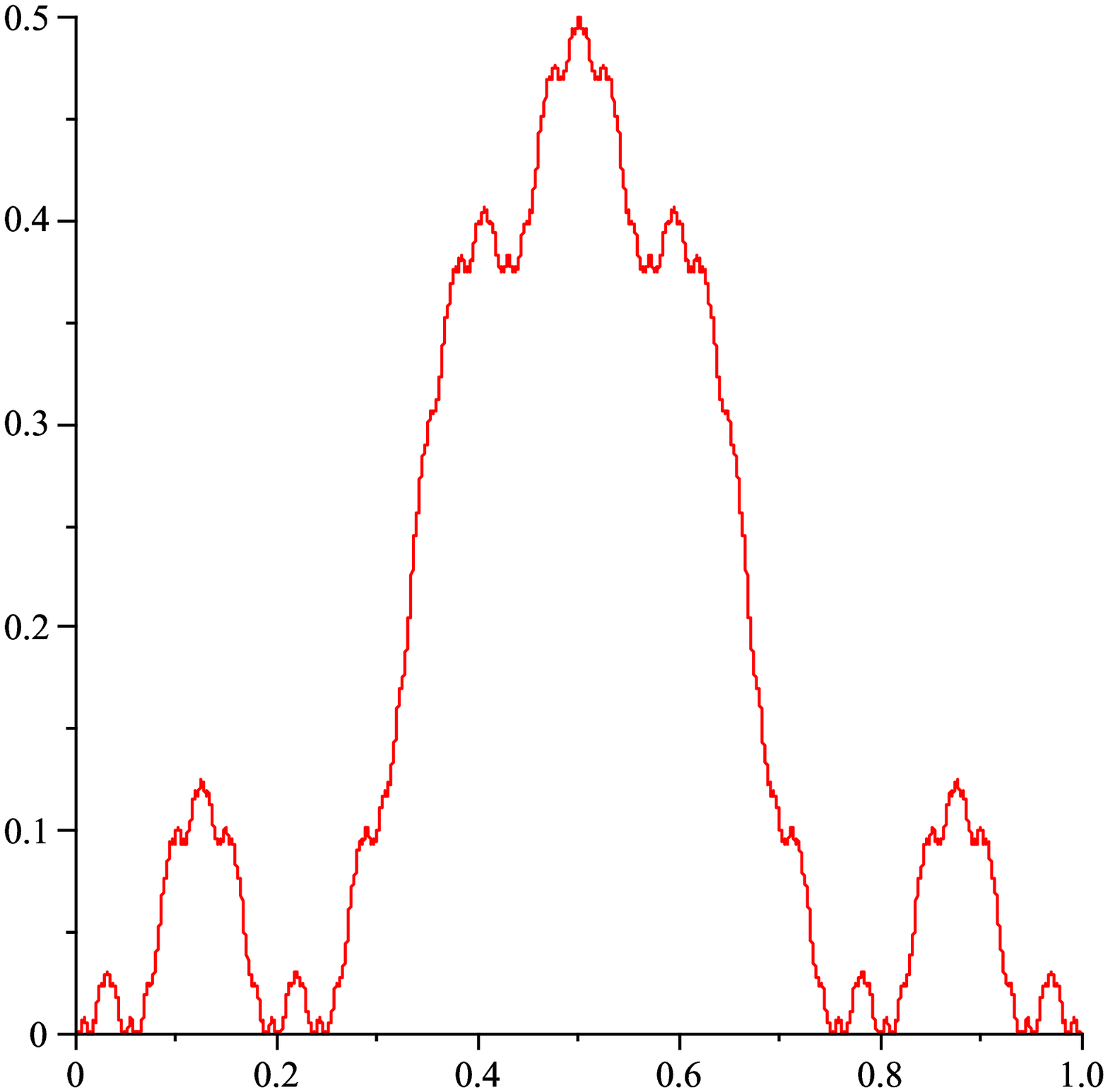, height=.25\textheight, width=.4\textwidth}
\qquad
\epsfig{file=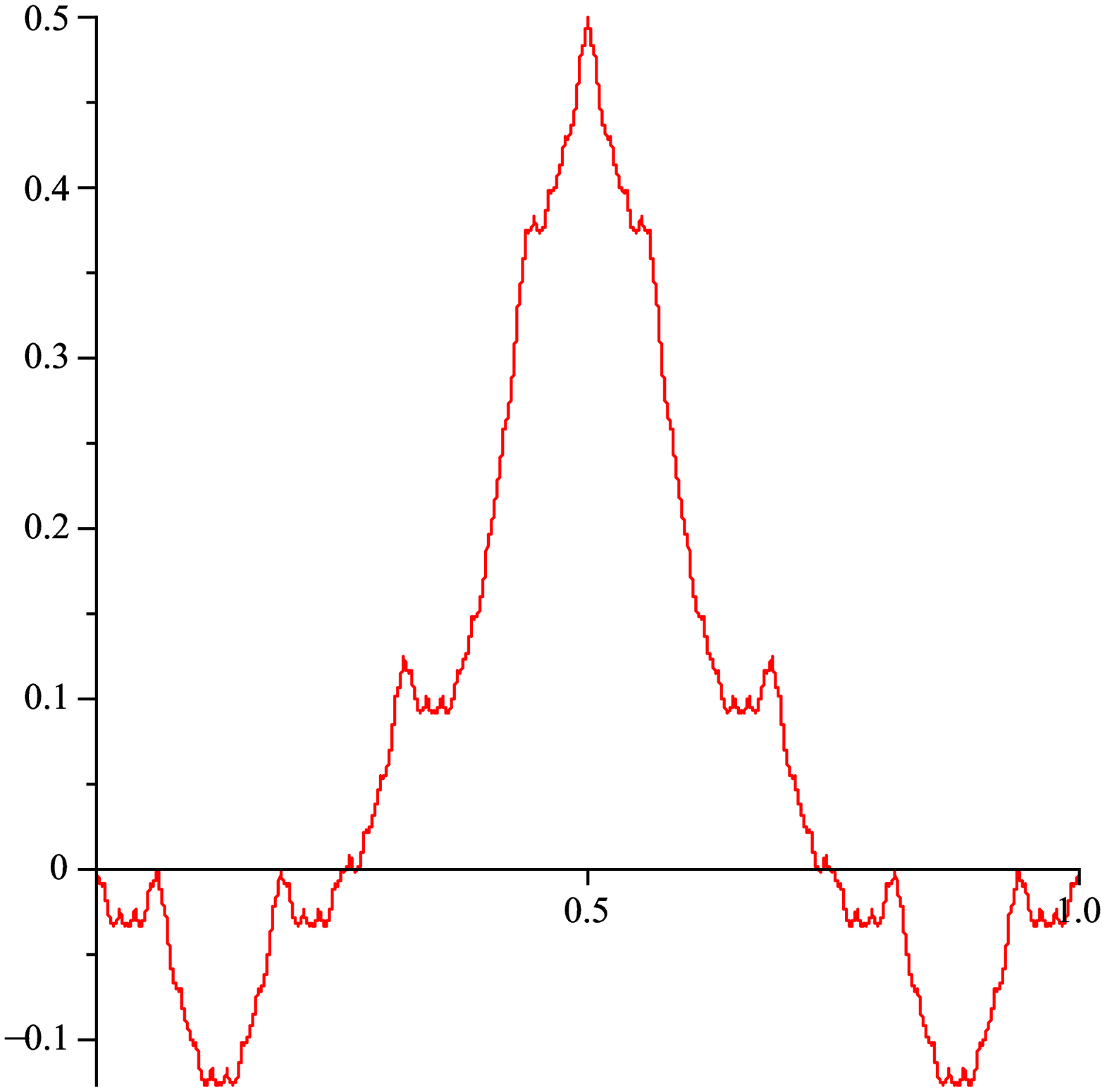, height=.25\textheight, width=.4\textwidth}
\caption{Two functions of the form \eqref{eq:general-function}: the alternating Takagi function (left) has $r_n=(-1)^n$; the function at right has $r_n=1$ if $n\equiv 0\ (\mod 3)$, $r_n=-1$ otherwise.}
\label{fig:general-examples}
\end{center}
\end{figure}

The first natural question to ask is, for which values of $y$ the level set $L_f(y)$ is nonempty. Since $f$ is continuous, its range is a compact interval which we denote by $[m(f),M(f)]$. Define the height of the graph of $f$ by
\begin{equation*}
\height(f):=\max_{x\in[0,1]}f(x)-\min_{x\in[0,1]}f(x)=M(f)-m(f).
\end{equation*}

\begin{theorem} \label{prop:height}
Put $s_n:=r_0+r_1+\dots+r_{n-1}$ for $n\in\NN$, and for $j\in\ZZ \backslash\{0\}$, let $\tau_j:=\inf\{n:s_n=j\}$. Then 
\begin{equation*}
M(f)=\sum_{k=1}^\infty \left(\frac12\right)^{\tau_{2k-1}} \qquad\mbox{and} \qquad  m(f)=-\sum_{k=1}^\infty \left(\frac12\right)^{\tau_{1-2k}},
\end{equation*}
where ${(1/2)}^\infty$ is interpreted as zero. Consequently, 
\begin{equation}
\height(f)=\sum_{j\in\ZZ} \left(\frac12\right)^{\tau_{2j-1}},
\label{eq:graph-height}
\end{equation}
and in particular, $1/2\leq \height(f)\leq 2/3$. There are uncountably many  $f\in\TT_{\pm}$ with $\height(f)=1/2$, but there are only four functions $f\in\TT_{\pm}$ with $\height(f)=2/3$: the functions $\pm\,T$ and the functions $\pm\,\tilde{T}$, where $\tilde{T}(x)=T(x)-2\phi(x)$.
\end{theorem}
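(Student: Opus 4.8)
The plan is to exploit the self‑similarity of $f_{\bf r}$ and reduce the computation of $M(f)$ to a discounted optimization over left/right choices of dyadic subintervals. Writing $\sigma{\bf r}=(r_1,r_2,\dots)$ and splitting $[0,1]$ at $1/2$, the identities $\phi(x)=x$ on $[0,\frac12]$, $\phi(x)=1-x$ on $[\frac12,1]$ together with the $1$-periodicity of $\phi$ give, for $u\in[0,1]$,
\[
f_{\bf r}\!\Big(\tfrac{u}{2}\Big)=\tfrac12\big(r_0u+f_{\sigma{\bf r}}(u)\big),\qquad
f_{\bf r}\!\Big(\tfrac{u+1}{2}\Big)=\tfrac12\big(r_0(1-u)+f_{\sigma{\bf r}}(u)\big).
\]
I would then introduce the tilted maxima $W_n(c):=\max_{u\in[0,1]}\big(f_{\sigma^n{\bf r}}(u)+cu\big)$ for $c\in\ZZ$, so that $M(f)=W_0(0)$; splitting each half of $f_{\sigma^n{\bf r}}$ and using the equations above yields the recursion
\[
W_n(c)=\tfrac12\max\big\{\,W_{n+1}(c+r_n),\ (c+r_n)+W_{n+1}(c-r_n)\,\big\}.
\]
Since $|f_{\sigma^n{\bf r}}|\le 1$, the term $2^{-N}W_N(c_N)$ is negligible as $N\to\infty$, so unfolding the recursion expresses $M(f)$ as the supremum, over all infinite left/right choice sequences, of the discounted total $\sum_{n:\,\mathrm R}(c_n+r_n)2^{-(n+1)}$ of the gains collected at the right‑steps, where $c_0=0$ and $c_{n+1}=c_n+r_n$ (left) or $c_{n+1}=c_n-r_n$ (right).

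The heart of the argument is to solve this optimization and show that the optimal strategy cashes in value exactly when the running maximum of the slope walk $s_n=r_0+\dots+r_{n-1}$ first reaches a new odd height, giving
\[
M(f)=\sum_{i\ \mathrm{odd},\ i>0}\Big(\tfrac12\Big)^{\tau_i}=\sum_{k=1}^\infty\Big(\tfrac12\Big)^{\tau_{2k-1}}.
\]
I would establish this by two matching bounds: a constructive lower bound that selects digits by the greedy first‑passage rule realizing the right‑hand side, and an envelope (upper) bound on $W_n$ showing that no strategy does better. Identifying this optimal policy and matching the tilt walk $(c_n)$ to the first passages of $(s_n)$ to the odd integers is the step I expect to be the main obstacle. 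The formula for $m(f)$ then follows by symmetry: since $f_{-{\bf r}}=-f_{\bf r}$ we have $m(f_{\bf r})=-M(f_{-{\bf r}})$, and negating ${\bf r}$ turns $s_n$ into $-s_n$, hence $\tau_{2k-1}$ into $\tau_{1-2k}$. Subtracting gives $\height(f)=M(f)-m(f)=\sum_{j\in\ZZ}(1/2)^{\tau_{2j-1}}$, which is \eqref{eq:graph-height}.

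For the bounds $1/2\le\height(f)\le2/3$ I would argue directly from \eqref{eq:graph-height}. Each odd integer $i$ contributes the single term $(1/2)^{\tau_i}$, realized at time $n=\tau_i$, and at any time $n$ at most one odd level is first reached (namely $i=s_n$). Since $s_n\equiv n\pmod 2$, the value $s_n$ is odd only when $n$ is odd; hence
\[
\height(f)=\sum_{n\ \mathrm{odd}}\Big(\tfrac12\Big)^{n}X_n,\qquad X_n\in\{0,1\},
\]
so $\height(f)\le\sum_{n\ \mathrm{odd}}2^{-n}=2/3$. As $s_1=r_0=\pm1$ is odd and is a first passage, $X_1=1$, giving $\height(f)\ge1/2$.

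Finally the equality cases read off from this indicator description. We have $\height(f)=1/2$ iff $X_n=0$ for all odd $n\ge3$, i.e.\ after the forced record at time $1$ the walk reaches no further odd value; when $r_0=1$ this confines $s_n$ to $\{0,1,2\}$, forcing $r_n=+1$ whenever $s_n=0$ and $r_n=-1$ whenever $s_n=2$, while leaving $r_n$ free at each of the infinitely many times $s_n=1$. These free binary choices (and their reflections for $r_0=-1$) produce uncountably many sequences, all with $\height(f)=1/2$. On the other hand $\height(f)=2/3$ iff $X_n=1$ for every odd $n$, i.e.\ a new odd value is reached at every odd time. Regarding $v_k:=s_{2k-1}$ as a walk on the odd integers with steps $v_{k+1}-v_k\in\{-2,0,2\}$, the requirement that every $v_k$ be new forces steps $\pm2$ and self‑avoidance, hence strict monotonicity; together with $v_1=r_0=\pm1$ this leaves exactly the four sequences $r_n\equiv1$, $r_n\equiv-1$, $(1,-1,-1,\dots)$ and $(-1,1,1,\dots)$, which are $T$, $-T$, $-\tilde{T}$ and $\tilde{T}$ respectively (using $\tilde{T}=T-2\phi$). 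This yields the four claimed functions and completes the proof.
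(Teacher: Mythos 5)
There is a genuine gap at the center of your argument. Your reduction is sound up to the point where you pose the optimization: the two-scale identities for $f_{\bf r}(u/2)$ and $f_{\bf r}((u+1)/2)$ are correct, the tilted maxima $W_n(c)$ do satisfy the stated recursion, and the truncation error $2^{-N}W_N(c_N)$ does vanish (note $|W_N(c_N)|$ grows at most linearly in $N$, so this is fine). But the entire content of the claimed formula $M(f)=\sum_{k}(1/2)^{\tau_{2k-1}}$ lies in solving that optimization, and you do not solve it: you announce that you ``would establish this by two matching bounds'' and then explicitly defer ``identifying the optimal policy and matching the tilt walk $(c_n)$ to the first passages of $(s_n)$'' as the main obstacle. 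Neither the greedy lower bound nor the envelope upper bound is actually exhibited, and the correspondence between the control walk $c_{n+1}=c_n\pm r_n$ and the slope walk $s_n$ is not obvious (they are driven by different sign conventions at right-steps). As it stands, the theorem's key identity is asserted, not proved. For what it is worth, the paper does not prove it internally either: it quotes the formula for $M(f)$ from Theorem 1.1 of the author's earlier paper on extrema of random Takagi functions (reference \cite{Allaart1}), where this first-passage decomposition is carried out in detail. A blind proof must either supply that argument or an equivalent one; yours supplies only the scaffolding.

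Everything downstream of the formula is correct and closely parallels the paper. Your derivation of $m(f)$ by the symmetry $f_{-{\bf r}}=-f_{\bf r}$, the observation that the finite $\tau_{2j-1}$ are distinct odd integers (since $s_n\equiv n\pmod 2$ and at most one odd level is first reached at any given time), the resulting bounds $1/2\leq\height(f)\leq 2/3$, the parametrization of the height-$1/2$ case by the walk confined to $\{0,1,2\}$ or $\{0,-1,-2\}$ with infinitely many free choices at the level $\pm 1$, and the classification of the height-$2/3$ case via the self-avoiding (hence eventually monotone) walk $v_k=s_{2k-1}$ on the odd integers, yielding exactly the four functions $\pm T$ and $\pm\tilde T$ --- all of this is right and is essentially the paper's own reasoning, in places slightly more detailed. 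The single missing piece is the proof of the extremum formula itself.
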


\begin{proof}
Since $f(0)=0$ and $f(1/2)=\pm\,1/2$, we have $\height(f)\geq 1/2$. 
The expression for $M(f)$ was derived in \cite[Theorem 1.1]{Allaart1}.
(Note that in \cite{Allaart1}, $\phi$ is defined as $2\dist(x,\ZZ)$ and hence the expression given there has an extra factor $2$.) The expression for $m(f)$ follows analogously, by symmetry. Subtracting the two expressions gives \eqref{eq:graph-height}. 
Since the $\tau_{2j-1}$ are all distinct and odd, it follows that 
\begin{equation*}
\height(f)\leq\sum_{n=1}^\infty {\left(\frac12\right)}^{2n-1}=\frac23.
\end{equation*}
Observe next that $\height(f)=1/2$ if and only if $\tau_{2j-1}=\infty$ for all $j$ but one. (Clearly, either $\tau_1$ or $\tau_{-1}$ is equal to $1$.) This is the case if and only if $\{s_n\}$ remains either in $\{0,1,2\}$ or in $\{0,-1,-2\}$, and there are uncountably many sequences ${\bf r}$ which satisfy this. On the other hand, $\height(f)=2/3$ if and only if every odd time is a record time for the sequence $\{|s_n|\}$. This is the case if and only if either $r_n=1$ for all $n\geq 1$, or $r_n=-1$ for all $n\geq 1$. This leaves the four cases indicated.
\end{proof}

We might remark that all four functions with height $2/3$ are identical except for translation and reflection in the $x$-axis, provided we regard them as periodic functions on $\RR$. The reader is encouraged to sketch the graphs!

Let $\lambda$ denote Lebesgue measure on $\RR$, and for any set $A$, let $|A|$ denote the number of elements in $A$. The next main result of this paper is

\begin{theorem} \label{thm:finite-ae}
For $\lambda$-almost every $y$, $L_f(y)$ is a finite set.
\end{theorem}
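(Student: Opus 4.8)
The plan is to adapt Buczolich's original argument for the classical Takagi function to the signed setting. The key tool is the "expansion" structure of $f$ coming from binary expansions of $x$. Writing $x=0.\varepsilon_1\varepsilon_2\dots$ in binary with $\varepsilon_i\in\{0,1\}$, one has the partial-sum representation
\begin{equation*}
f(x)=\sum_{n=0}^\infty \frac{r_n}{2^n}\phi(2^n x),
\end{equation*}
and $\phi(2^n x)$ depends only on the tail $\varepsilon_{n+1}\varepsilon_{n+2}\dots$. The standard computation shows that $f$ restricted to a dyadic interval $I_{n}=[k/2^n,(k+1)/2^n]$ is affine plus a scaled copy of $\pm f$, so that $f(x)=f_n(x)+2^{-n}g(2^n x)$, where $f_n$ is piecewise linear with integer-or-half-integer slopes and $g$ is again a signed Takagi function (with shifted sign sequence). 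First I would make this self-similarity precise and introduce the sequence of slopes $S_n(x):=\sum_{k=0}^{n-1} r_k(1-2\varepsilon_{k+1})$, the local slope of the $n$-th linear approximant.

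The heart of Buczolich's method is to track, for a fixed level $y$, the number of dyadic intervals of generation $n$ on which the piecewise-linear approximant crosses the level $y$, and to show that the expected behaviour of $|L_f(y)|$ is controlled. The cleaner route, which I would take, is to integrate over $y$: by the coarea/Banach indicatrix idea, $\int_{\RR} N_n(y)\,dy$ equals the total variation of the $n$-th linear approximant $f_n$, where $N_n(y)$ counts level-$y$ crossings at generation $n$. Since each linear piece of $f_n$ has slope $\pm S_j$ with $|S_j|$ bounded by the excursions of the walk $s_n$, the total variation of $f_n$ stays bounded as $n\to\infty$ precisely when the level set stabilises. Thus I would define the (possibly infinite) random variable counting the "leaves" that survive to infinity and show $\int |L_f(y)|\,dy<\infty$, whence $|L_f(y)|<\infty$ for $\lambda$-a.e.\ $y$.

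To make this rigorous, the central object is the random walk $S_n$ under Lebesgue measure on $x\in[0,1]$: the digits $\varepsilon_i$ are i.i.d.\ fair coin flips, so $S_n$ is a signed random walk whose increments $r_{k}(1-2\varepsilon_{k+1})\in\{\pm r_k\}=\{\pm 1\}$ are i.i.d.\ symmetric $\pm1$, independent of the signs $r_k$. This is the crucial observation: even though the $r_k$ are deterministic and arbitrary, multiplying by the symmetric Rademacher factor $(1-2\varepsilon_{k+1})$ kills the dependence on $\mathbf{r}$, so $\{S_n\}$ is an ordinary simple symmetric random walk regardless of $\mathbf{r}$. I would then mirror Buczolich: a point $x$ survives in the level set only through intervals where $S_n$ returns near its level, and the recurrence/transience estimates for the simple random walk (the number of intervals at generation $n$ carrying a prescribed local slope decays like $n^{-1/2}$ in measure) give the integrability needed.

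The main obstacle I anticipate is exactly the interplay between the arbitrary deterministic signs $\mathbf{r}$ and the combinatorics of which intervals contribute to a given level set. In the classical case $r_n\equiv1$ the slopes $S_n$ are integers and Buczolich exploits sharp local-time estimates for the symmetric walk together with an explicit description of surviving intervals; here I must check that the same estimates go through once the slope process is re-identified as a simple symmetric random walk via the argument above, and that the affine pieces $f_n$ still have slopes in $\ZZ$ (this holds because each increment is $\pm1$). The delicate point is controlling the \emph{constant terms} (the heights $f_n(k/2^n)$) of the linear pieces rather than just their slopes, since two intervals with the same slope may carry different offsets and thus intersect different levels; handling the joint distribution of (slope, offset) at generation $n$, and showing the expected number of surviving crossings of a generic level is summable, is where the real work lies. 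I expect that once the reduction to the simple symmetric random walk is in place, the finiteness follows by reproducing Buczolich's estimate essentially verbatim, with the signs $r_n$ entering only through the offsets, which average out.
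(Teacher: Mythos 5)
Your overall framework---the dyadic self-similarity and the observation that the slope process $D_n(x)$ is a simple symmetric random walk under Lebesgue measure on $x$ regardless of the deterministic signs $r_k$---is sound and is indeed used in the paper (Lemma \ref{lem:dense-x-union}). But the pivot of your argument, namely proving $\int_\RR |L_f(y)|\,dy<\infty$ and deducing a.e.\ finiteness, cannot work: that integral is \emph{infinite}. By Banach's indicatrix theorem, $\int_\RR |L_f(y)|\,dy$ equals the total variation of $f$, and every $f\in\TT_{\pm}$ is nowhere differentiable, hence of unbounded variation; the paper points this out explicitly immediately after the statement of the theorem. Your intermediate quantity fails for the same reason: the total variation of the approximant $f_n$ equals $\int_0^1|D_n(x)|\,dx=\rE|D_n|\sim\sqrt{2n/\pi}\to\infty$, so it does not stay bounded. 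Thus a.e.\ finiteness of $L_f(y)$ must be proved without any integrability of $|L_f(y)|$ itself, and ``finiteness of the integral, hence a.e.\ finiteness of the integrand'' is not an available route here.

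The repair, which is what the paper does, is to integrate a \emph{coarser} count and supply a separate structural bound. One counts humps $H(x_0)$ based at balanced dyadic rationals ($D_{2m}(x_0)=0$), reduces by the symmetry $|D_j|$ to \emph{principal} humps (Lemma \ref{lem:leading-humps}), of which there are only $C_m$ of order $m$ (Lemma \ref{lem:hump-count}); since each has vertical extent at most $\tfrac23 4^{-m}$ by Theorem \ref{prop:height}, one gets $\sum_{H\in\HH_p}\lambda(\pi_Y(H))\le\tfrac23\sum_m C_m 4^{-m}<\infty$, and Borel--Cantelli shows that a.e.\ $y$ meets only finitely many principal humps, hence only finitely many humps. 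The second ingredient is not an integral estimate at all: off the first-generation humps, $f$ can be flattened to a \emph{monotone} function $f^*$ (Proposition \ref{lem:bijection}), so for $y\notin f(\BB)$ each truncated hump meets the line $l_y$ in at most two points (Corollary \ref{cor:hump-intersections}); combined with the finiteness of the number of humps met and a maximal-generation argument (Lemma \ref{lem:when-finite}), this forces $|L_f(y)|<\infty$. Your proposal is missing both the two-point-per-truncated-hump bound and any replacement for the false integrability claim, so as it stands the argument does not go through.
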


However, the expected number of points in $L_f(y)$ for $y$ chosen at random from the range of $f$ is infinite. This follows from a theorem of Banach (see Saks \cite[Theorem IX.6.4]{Saks}), since $f$ is nowhere differentiable and hence of unbounded variation, so that
\begin{equation*}
\int_\RR |L_f(y)|\,dy=\infty.
\end{equation*}

While Theorem \ref{thm:finite-ae} is of a probabilistic nature, the Baire category view gives a rather different picture.

\begin{theorem} \label{thm:uncountables}
The set 
\begin{equation*}
Y_f^p:=\{y\in\RR: L_f(y)\ \mbox{is a perfect set (hence uncountable)}\}
\end{equation*}
is residual in the range of $f$. 
\end{theorem}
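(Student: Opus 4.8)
The plan is to prove the equivalent statement that, outside a meager set, every level set over the open range $(m(f),M(f))$ is closed, nonempty, and free of isolated points, hence perfect; since the two endpoints of the range form a meager set, this gives that $Y_f^p$ is residual in $[m(f),M(f)]$. Continuity of $f$ already makes each $L_f(y)$ closed, and the intermediate value theorem makes it nonempty for $y\in(m(f),M(f))$, so the whole content is to control isolated points.

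The engine will be a self-affine \emph{branching lemma}. Writing $f_n$ for the $n$-th partial sum in \eqref{eq:general-function}, one has on each dyadic interval $I=[k2^{-n},(k+1)2^{-n}]$ the decomposition $f(x)=f_n(x)+2^{-n}f_{\sigma^n{\bf r}}(2^nx-k)$, where $\sigma$ denotes the shift on $\{-1,1\}^{\NN}$ and $f_n$ is affine on $I$ with integer slope $c_I$. Passing from $I$ to either of its two dyadic children changes this slope to $c_I+1$ or $c_I-1$ (the two children realizing the two signs), so by always selecting the child that decreases $|c_I|$ one reaches, after $|c_I|$ steps, a dyadic subinterval $J\subseteq I$ on which the affine slope is $0$. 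On $J$, $f$ is, up to an additive constant and the scaling factor $2^{-|J|}$, an exact copy of the signed Takagi function $f_{\sigma^{|J|}{\bf r}}$, which by Theorem \ref{prop:height} has oscillation at least $1/2$ and vanishes at both endpoints. Such a function is nonconstant with equal endpoint values, hence attains an interior extremum whose value lies strictly beyond the common endpoint value; for all values in a one-sided open neighborhood of that extreme value it is attained at two points lying in disjoint subintervals of $J$. Transporting this back, I obtain two disjoint dyadic subintervals $J_1,J_2\subseteq I$ and a nonempty open interval $W_I$ contained in the interior of $f(\overline{J_1})\cap f(\overline{J_2})$, so that every $y\in W_I$ has at least two preimages in $I$, one in $J_1$ and one in $J_2$.

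With the branching lemma in hand the category argument is routine. For a dyadic interval $I$ let $R_I$ be the interior of $f(\overline I)$ and let $B_I=\{y:L_f(y)\cap I$ is a single interior point$\}$. First, for any fixed $I$ the branching ranges $W_{I'}$ over dyadic $I'\subseteq I$ are dense in $R_I$: given a target subinterval, pick a non-dyadic $x_*\in\operatorname{int}(I)$ with $f(x_*)$ equal to its midpoint, take $I'\ni x_*$ dyadic of high enough level that $f(\overline{I'})$ lies inside the target, and use $W_{I'}\subseteq R_{I'}\subseteq R_I$. Since any $y\in W_{I'}$ has two preimages in $I'\subseteq I$, it cannot lie in $B_I$; thus $B_I\cap R_I$ lies in the complement of a dense open subset of $R_I$, so $B_I$ is nowhere dense. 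Finally, if $L_f(y)$ has an isolated point then either that point is a dyadic rational, which places $y$ in the countable set $f(\ZZ[1/2])$, or it is interior to some dyadic $I$ with $L_f(y)\cap I$ a single interior point, placing $y\in B_I$. Hence the set of $y$ whose level set has an isolated point is contained in a countable union of nowhere dense sets, so it is meager, and its complement is the required residual set.

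The main obstacle is the branching lemma, and within it two genuinely nontrivial points: that the affine slope can always be driven to $0$ inside any dyadic interval, so that an untilted scaled copy of a signed Takagi function appears regardless of the global trend of $f$ on $I$; and that this copy yields two preimages in \emph{disjoint} subintervals over a whole open range of ordinates rather than merely at a single value. Both rest only on continuity, the self-affine identity, and the lower bound $\height(f)\ge 1/2$ supplied by Theorem \ref{prop:height}; the remaining Baire-category bookkeeping is standard.
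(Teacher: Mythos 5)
Your argument is correct, but it takes a genuinely different route from the paper. The paper's proof is very short: it first shows that $f$ is monotone on no interval (using essentially the same structural fact that drives your branching lemma, namely that every subinterval of $[0,1]$ contains a balanced dyadic interval over which $f$ is an untilted, rescaled copy of some $g\in\TT_{\pm}$ with $g(0)=g(1)=0$ and $\height(g)\ge 1/2$), and then invokes a theorem of Garg stating that for \emph{any} continuous nowhere monotone function the level sets are perfect for a residual set of ordinates in the range. You instead reprove the relevant special case of Garg's theorem from scratch: the branching lemma produces, inside every dyadic interval $I$, a nonempty open set of ordinates each having two preimages in $I$; these open sets are dense in the interior of $f(\overline{I})$; and the Baire bookkeeping over the countably many dyadic $I$, together with the countable set $f(\ZZ[1/2])$ handling dyadic isolated points, shows that the set of $y$ whose level set has an isolated point is meager. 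Two cosmetic points: the insistence on \emph{disjoint dyadic} subintervals $J_1,J_2$ is more than you need (two distinct preimages in $I$ already rule out membership in $B_I$, and actually producing dyadic $J_1,J_2$ would cost a little extra work), and in the density step you cannot always realize the exact midpoint of the target interval at a non-dyadic point --- one should first perturb the target ordinate within $V$ to avoid the countable set $f(\ZZ[1/2])$. Neither issue affects correctness. Your approach buys self-containedness; the paper's buys brevity and the greater generality of Garg's statement. It is worth noting that the paper's acknowledgments reveal that the author's original direct argument was found to be erroneous and the theorem was rescued by the referee's pointer to Garg, so a correct hands-on proof such as yours is a genuine alternative rather than a mere unpacking of the citation.
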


Note that the sets $L_f(y)$ are closed and bounded. Thus, Theorems \ref{thm:finite-ae} and \ref{thm:uncountables} can be stated equivalently by saying that for Lebesgue-almost every $y$, $L_f(y)$ consists {\em entirely} of isolated points, but for a set of $y$-values residual in the range of $f$, $L_f(y)$ has {\em no} isolated points. This underscores the contrast between the two theorems.

The last main result of this paper concerns local level sets. The definition of a local level set given by Lagarias and Maddock \cite{LagMad1} can be written in a way that makes sense for all $f\in\TT_{\pm}$. We show that the average number of local level sets contained in a level set of $f$ is equal to the reciprocal of the height of $f$, and conclude by Theorem \ref{prop:height} that this average is at least $3/2$, but at most $2$. A precise statement of this result involves some notation and preliminary lemmas, and will be deferred until Section \ref{sec:local}.

The rest of this paper is organized as follows. Section \ref{sec:prelim} collects several elementary properties of functions in $\TT_{\pm}$, and introduces important notation and useful concepts. Section \ref{sec:key-proposition} states and proves a key proposition on which the main results of this paper depend. In Section \ref{sec:proofs} we prove Theorems \ref{thm:finite-ae} and \ref{thm:uncountables}. Finally, in Section \ref{sec:local}, we give a precise definition of local level sets and state and prove the theorem about the average number of local level sets contained in a level set of $f\in\TT_{\pm}$.

\section{Preliminaries} \label{sec:prelim}

Throughout this paper, $\ZZ_+$ denotes the set of nonnegative integers, $\NN$ the set of positive integers, and $\pi_Y(A)$ the projection of a set $A\subset \RR^2$ onto the $y$-axis.

In this section, consider $f\in\TT_{\pm}$ arbitrary but fixed.
An important aspect of $f$ is its symmetry with respect to $x=1/2$:
\begin{equation*}
f(1-x)=f(x), \qquad x\in[0,1].
\end{equation*}
Define the partial sums
\begin{equation*}
f_k(x):=\sum_{n=0}^{k-1}\frac{r_n}{2^n}\phi(2^n x), \qquad k\in\NN.
\end{equation*}
Each $f_k$ is piecewise linear with integer slopes. In fact, the slope of $f_k$ at a non-dyadic point $x$ is easily expressed in terms of the binary expansion of $x$. We write $x\in[0,1)$ in binary as
\begin{equation*}
x=\sum_{n=1}^\infty \frac{\eps_n}{2^n}=0.\eps_1\eps_2\dots\eps_n\dots, \qquad\eps_n=\eps_n(x)\in\{0,1\},
\end{equation*}
with the convention that if $x$ is dyadic rational, we take the representation ending in all zeros. For $k\in\ZZ_+$, let
\begin{equation*}
D_k(x):=\sum_{j=1}^k r_{j-1}(-1)^{\eps_j}.
\end{equation*}
It follows directly from \eqref{eq:general-function} that the slope of $f_k$ at a non-dyadic point $x$ is $D_k(x)$. 

The following formula for $f(x)$ generalizes an expression for the Takagi function given by Lagarias and Maddock \cite[p.~212]{LagMad1}.

\begin{lemma} \label{lem:general-D-expression}
The function $f$ can be expressed in terms of ${\bf r}$ and $\{D_n(x)\}$ by
\begin{equation}
f(x)=C({\bf r}) -\frac14\sum_{n=1}^\infty(-1)^{\eps_{n+1}}\frac{D_n(x)}{2^n},
\label{eq:general-D-expression}
\end{equation}
where $C({\bf r})=\sum_{n=0}^\infty r_n/2^{n+2}$ is independent of $x$.
\end{lemma}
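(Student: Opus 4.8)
The plan is to expand each summand $\phi(2^n x)$ in terms of the binary digits of $x$, substitute into \eqref{eq:general-function}, and then reorganize a double sum so that the slope quantities $D_m(x)$ appear. The starting point is the elementary identity, valid for every non-dyadic $x$,
\begin{equation*}
\phi(2^n x)=\eps_{n+1}+(-1)^{\eps_{n+1}}t_n, \qquad t_n:=\{2^n x\}=\sum_{j=1}^\infty\frac{\eps_{n+j}}{2^j},
\end{equation*}
which simply records that $\phi(2^n x)=t_n$ when $\eps_{n+1}=0$ and $\phi(2^n x)=1-t_n$ when $\eps_{n+1}=1$. I would substitute this into $f(x)=\sum_{n\ge 0}2^{-n}r_n\phi(2^n x)$, split off the $\eps_{n+1}$ term, and interchange the order of summation in the remaining double sum $\sum_{n\ge0}\sum_{m>n}2^{-m}r_n(-1)^{\eps_{n+1}}\eps_m$. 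Collecting the coefficient of $\eps_m/2^m$ and reindexing by $j=n+1$, the inner sum becomes exactly $\sum_{n=0}^{m-1}r_n(-1)^{\eps_{n+1}}=D_m(x)$. This yields the clean intermediate identity
\begin{equation*}
f(x)=2\sum_{m=1}^\infty\frac{r_{m-1}\eps_m}{2^m}+\sum_{m=1}^\infty\frac{\eps_m\,D_m(x)}{2^m},
\end{equation*}
which I would sanity-check against a simple value such as $f(1/2)=r_0/2$.

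The second stage is purely algebraic: I would replace $\eps_m=\tfrac12\bigl(1-(-1)^{\eps_m}\bigr)$ throughout, and use the recursion $D_m(x)=D_{m-1}(x)+r_{m-1}(-1)^{\eps_m}$ together with $D_0(x)=0$. Two facts do the work. First, $(-1)^{\eps_m}D_m(x)=(-1)^{\eps_m}D_{m-1}(x)+r_{m-1}$, and reindexing $n=m-1$ produces precisely the target sum $\tfrac12\sum_{n\ge1}(-1)^{\eps_{n+1}}D_n(x)/2^n$ plus the constant $\sum_{n\ge0}r_n/2^{n+1}=2C(\mathbf r)$. Second, summing the geometric series in $\sum_{m\ge1}D_m(x)/2^m=2\sum_{j\ge1}r_{j-1}(-1)^{\eps_j}/2^j$ and again writing $(-1)^{\eps_j}=1-2\eps_j$ shows that this $x$-dependent sum equals $4C(\mathbf r)-4\sum_{j\ge1}r_{j-1}\eps_j/2^j$. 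When these are assembled, the two copies of $\sum r_{m-1}\eps_m/2^m$ cancel exactly, the constant collapses to a single $C(\mathbf r)$, and one is left with \eqref{eq:general-D-expression}.

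I expect the main obstacle to be not conceptual but organizational: the derivation passes through several $x$-dependent auxiliary sums that must all cancel in the end, so the bookkeeping has to be done carefully to avoid sign or reindexing slips. The one genuine point requiring justification is the interchange of summation order in the first stage, which is legitimate because the double series converges absolutely: $\sum_n 2^{-n}|r_n|\phi(2^n x)\le\sum_n 2^{-n-1}<\infty$, so Fubini applies. Finally, since the digit identity for $\phi(2^n x)$ uses a fixed binary expansion, I would note that the dyadic rationals are handled by the ``ending in zeros'' convention adopted in Section~\ref{sec:prelim} (one checks directly that the formula gives the correct value $\phi=1/2$ at the boundary case $t_n=1/2$), so that \eqref{eq:general-D-expression} holds for all $x\in[0,1)$.
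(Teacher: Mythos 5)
Your argument is correct and follows essentially the same route as the paper: expand $\phi(2^n x)$ digit-by-digit, substitute into \eqref{eq:general-function}, and interchange the (absolutely convergent) double sum so that the inner sum collapses to $D_m(x)$. The only difference is that the paper starts from the expansion $\phi(2^nx)=\sum_{k\ge2}\bigl(1-(-1)^{\eps_{n+1}}(-1)^{\eps_{n+k}}\bigr)/2^{k+1}$, which lands directly on the form \eqref{eq:general-D-expression} after one interchange, whereas your identity $\phi(2^nx)=\eps_{n+1}+(-1)^{\eps_{n+1}}t_n$ produces the intermediate sum $\sum_m \eps_m D_m(x)/2^m$ and so requires the extra (correct) algebraic stage with the recursion $D_m=D_{m-1}+r_{m-1}(-1)^{\eps_m}$ to reach the stated formula.
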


\begin{proof}
Let $\omega_k=(-1)^{\eps_k}$. Then we can write $\phi(x)$ as
\begin{equation*}
\phi(x)=\sum_{k=1}^\infty \frac{\eps_k(1-\eps_1)+(1-\eps_k)\eps_1}{2^k} =\sum_{k=2}^\infty \frac{1-\omega_1 \omega_k}{2^{k+1}}.
\end{equation*}
Similarly,
\begin{equation*}
\phi(2^n x)=\sum_{k=2}^\infty \frac{1-\omega_{n+1}\omega_{n+k}}{2^{k+1}}.
\end{equation*}
Substituting this into \eqref{eq:general-function} we obtain
\begin{align*}
f(x)&=\sum_{n=0}^\infty \sum_{k=2}^\infty \frac{r_n}{2^{n+k+1}}(1-\omega_{n+1} \omega_{n+k})
=\sum_{n=0}^\infty \sum_{j=n+2}^\infty \frac{r_n}{2^{j+1}}(1-\omega_{n+1}\omega_j)\\
&=C({\bf r})-\sum_{j=2}^\infty \frac{\omega_j}{2^{j+1}} \sum_{n=0}^{j-2} r_n \omega_{n+1}
=C({\bf r})-\sum_{j=2}^\infty \frac{(-1)^{\eps_j}}{2^{j+1}} D_{j-1}(x),
\end{align*}
and re-indexing gives \eqref{eq:general-D-expression}.
\end{proof}

The formula \eqref{eq:general-D-expression} yields an easy proof of the following important fact. The proof of this lemma, as well as the proofs of Lemmas \ref{lem:hump-count} and \ref{lem:leading-humps} below, were already given in \cite{Allaart3}. But since the proofs are short, they are repeated here (with minor notational changes) for completeness.

\begin{lemma} \label{lem:equivalence}
If $|D_n(x)|=|D_n(x')|$ for every $n$, then $f(x)=f(x')$.
\end{lemma}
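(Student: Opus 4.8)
The plan is to read the proof directly off the series representation \eqref{eq:general-D-expression}. My goal is to show that each summand $(-1)^{\eps_{n+1}}D_n(x)$ depends on $x$ only through the absolute values $|D_n(x)|$ and $|D_{n+1}(x)|$ (together with the fixed signs $r_n$). Once this is established, the hypothesis $|D_n(x)|=|D_n(x')|$ for all $n$ forces the two series to agree term by term, and since $C(\mathbf{r})$ does not depend on $x$, we obtain $f(x)=f(x')$.

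First I would record the one-step recursion hidden in the definition of $D_n$. Writing $\omega_k=(-1)^{\eps_k}$ as in the proof of Lemma \ref{lem:general-D-expression}, the definition of $D_n$ gives $D_{n+1}(x)-D_n(x)=r_n\omega_{n+1}$, and hence (using $r_n=1/r_n$)
$$(-1)^{\eps_{n+1}}D_n(x)=\omega_{n+1}D_n(x)=r_n\bigl(D_{n+1}(x)-D_n(x)\bigr)D_n(x)=r_n\bigl(D_{n+1}(x)D_n(x)-D_n(x)^2\bigr).$$
This still involves the signed product $D_{n+1}(x)D_n(x)$, so one more observation is needed.

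The key point --- and the only step with any real content --- is that consecutive values $D_n(x)$ and $D_{n+1}(x)$ never lie strictly on opposite sides of $0$. Indeed $D_{n+1}(x)-D_n(x)=r_n\omega_{n+1}=\pm 1$, so if $D_n(x)>0$ then $D_{n+1}(x)\geq 0$, while if $D_n(x)<0$ then $D_{n+1}(x)\leq 0$; in every case $D_{n+1}(x)D_n(x)=|D_{n+1}(x)|\,|D_n(x)|$. Substituting this identity into the display above gives
$$(-1)^{\eps_{n+1}}D_n(x)=r_n\bigl(|D_{n+1}(x)|-|D_n(x)|\bigr)|D_n(x)|,$$
whose right-hand side depends on $x$ only through $r_n$, $|D_n(x)|$ and $|D_{n+1}(x)|$.

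Finally I would invoke the hypothesis: since $|D_n(x)|=|D_n(x')|$ for every $n$, each summand in \eqref{eq:general-D-expression} takes the same value at $x$ and at $x'$, so the two series are identical and $f(x)=f(x')$. All of the work beyond the sign observation is pure substitution into the already-proven formula \eqref{eq:general-D-expression}, so I expect no further obstacles.
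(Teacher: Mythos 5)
Your proof is correct, and it takes a genuinely different (and arguably cleaner) route than the paper's. Both arguments start from the representation \eqref{eq:general-D-expression}, but the paper proceeds combinatorially: it partitions the index set into the excursion intervals between consecutive zeros of $D_n(x)$ (which coincide with those of $D_n(x')$ under the hypothesis), observes that on any such interval where $D_n(x)=-D_n(x')$ the binary digits must be complementary, and then checks that the two signs cancel so that the sum over each block agrees. You instead prove the stronger, purely termwise statement that each summand $(-1)^{\eps_{n+1}}D_n(x)$ equals $r_n\bigl(|D_{n+1}(x)|-|D_n(x)|\bigr)|D_n(x)|$, using the identities $\omega_{n+1}=r_n\bigl(D_{n+1}(x)-D_n(x)\bigr)$ and $D_{n+1}(x)D_n(x)=|D_{n+1}(x)|\,|D_n(x)|$ (the latter because consecutive values of the walk $D_n$ differ by $\pm 1$ and hence cannot strictly change sign). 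This eliminates any need to compare the digit sequences of $x$ and $x'$ or to organize the sum into blocks: the hypothesis immediately forces term-by-term equality of the two series. The trade-off is negligible; your algebraic identity encapsulates exactly the cancellation the paper verifies by hand, and it makes transparent that $f(x)$ is a function of the sequence $\bigl(|D_n(x)|\bigr)_{n\geq 1}$ alone, which is precisely the content of the lemma.
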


\begin{proof}
Let $Z=\{n\geq 0: D_n(x)=0\}$, and enumerate the elements of $Z$ as $0=n_0<n_1<n_2<\dots$. If $D_n(x)=-D_n(x')$ for $n_i<n<n_{i+1}$ (where possibly $n_{i+1}=\infty$), then it must be the case that $\eps_n(x)=1-\eps_n(x')$ for $n_i<n\leq n_{i+1}$. Since $D_{n_{i+1}}(x)=D_{n_{i+1}}(x')=0$, it follows that the part of the summation in \eqref{eq:general-D-expression} over $n_i<n\leq n_{i+1}$ is the same for $x$ as for $x'$. This can be repeated for all intervals $(n_i,n_{i+1}]$ on which $D_n(x)\neq D_n(x')$. Hence, $f(x)=f(x')$.
\end{proof}

An important consequence of Lemma \ref{lem:equivalence} is that, if $D_n(x)=0$ for infinitely many indices $n$, then the level set $L_f(f(x))$ contains a Cantor set, as there are $2^{\aleph_0}$ many points $x'$ such that $|D_n(x)|=|D_n(x')|$ for each $n$.

The term `balanced' in the following definition was introduced by Lagarias and Maddock \cite{LagMad1}.

\begin{definition} \label{def:balanced}
{\rm
A dyadic rational of the form $x=0.\eps_1\eps_2\dots\eps_{2m}$ is called {\em balanced} if $D_{2m}(x)=0$. If there are exactly $n$ indices $1\leq j\leq 2m$ such that $D_j(x)=0$, we say $x$ is a balanced dyadic rational of {\em generation} $n$. By convention, we consider $x=0$ to be a balanced dyadic rational of generation $0$.

For $n\in\ZZ_+$, the set of balanced dyadic rationals of generation $n$ is denoted by $\BB_n$. We also set $\BB=\bigcup_{n=0}^\infty \BB_n$, so $\BB$ is the set of all balanced dyadic rationals.
}
\end{definition}

Next, let
\begin{equation*}
\mathcal{G}_f:=\{(x,f(x)): 0\leq x\leq 1\}
\end{equation*}
denote the graph of $f$ over the unit interval $[0,1]$.

\begin{lemma} \label{lem:similar-copies}
Let $m\in\NN$, and let $x_0=k/2^{2m}=0.\eps_1\eps_2\dots\eps_{2m}$ be a balanced dyadic rational. Then for $x\in[k/2^{2m},(k+1)/2^{2m}]$, we have
\begin{equation*}
f(x)=f(x_0)+2^{-2m}g\left(2^{2m}(x-x_0)\right),
\end{equation*}
for some function $g\in\TT_{\pm}$. Moreover, the function $g$ depends on $x_0$ only through $m$.
In other words, the part of $\GG_f$ above the interval $[k/2^{2m},(k+1)/2^{2m}]$ is a similar copy of $\mathcal{G}_g$ for some $g\in\TT_{\pm}$, reduced by a factor $2^{2m}$ and shifted up by $f(x_0)$.
\end{lemma}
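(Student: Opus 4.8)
The plan is to split the defining series \eqref{eq:general-function} at the index $2m$ and treat the ``head'' $f_{2m}$ and the ``tail'' $\sum_{n\geq 2m}$ separately, after substituting $x=x_0+t$ with $t\in[0,2^{-2m}]$. The whole argument rests on two elementary observations: on the dyadic interval in question the partial sum $f_{2m}$ is \emph{constant} (because $x_0$ is balanced), while every tail term simplifies through the periodicity of $\phi$ (because $2^n x_0\in\ZZ$ once $n\geq 2m$).

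First I would handle the head. For every $x$ in the open interval $(k/2^{2m},(k+1)/2^{2m})$ the binary digits $\eps_1,\dots,\eps_{2m}$ are fixed and equal to those of $x_0$, so the slope $D_{2m}(x)$ of $f_{2m}$ is constant there, and equals $D_{2m}(x_0)=0$ precisely because $x_0\in\BB$. Hence $f_{2m}$ is constant on the closed interval, with value $f_{2m}(x_0)$.

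Next I would simplify the tail. For $n\geq 2m$ we have $2^n x_0=k\,2^{n-2m}\in\ZZ$, so by the $1$-periodicity of $\phi$, $\phi(2^n x)=\phi(2^n t)$. Writing $n=2m+\ell$ and $u:=2^{2m}t=2^{2m}(x-x_0)\in[0,1]$ gives $\phi(2^n t)=\phi(2^\ell u)$, whence
\[
\sum_{n=2m}^\infty \frac{r_n}{2^n}\phi(2^n x)=2^{-2m}\sum_{\ell=0}^\infty \frac{r_{2m+\ell}}{2^\ell}\phi(2^\ell u)=2^{-2m}g(u),
\]
where $g(u):=\sum_{\ell=0}^\infty (r_{2m+\ell}/2^\ell)\phi(2^\ell u)$ is manifestly a member of $\TT_{\pm}$, namely $f_{{\bf r}'}$ with the shifted sign sequence ${\bf r}'=(r_{2m},r_{2m+1},\dots)$.

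Finally I would assemble the pieces. Since $\phi(2^n x_0)=0$ for all $n\geq 2m$, the tail vanishes at $x=x_0$, so $f(x_0)=f_{2m}(x_0)$; combining this with the two computations yields $f(x)=f(x_0)+2^{-2m}g\left(2^{2m}(x-x_0)\right)$. The assertion that $g$ depends on $x_0$ only through $m$ is then immediate, since the sequence ${\bf r}'$ depends only on $m$ and the fixed sequence ${\bf r}$, and not at all on $k$ or on the digits $\eps_1,\dots,\eps_{2m}$. I do not expect a serious obstacle here; the only point genuinely requiring care is the use of balancedness to annihilate the slope of the head, which is exactly where the hypothesis $x_0\in\BB$ (rather than an arbitrary dyadic rational) is needed.
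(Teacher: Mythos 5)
Your proof is correct and is exactly the argument the paper has in mind: the paper's one-line proof ("the slope of $f_{2m}$ over the interval is $D_{2m}(x_0)=0$, and $f(x_0)=f_{2m}(x_0)$, with $g=f_{{\bf r}'}$ for ${\bf r}'=(r_{2m},r_{2m+1},\dots)$") is precisely your head/tail decomposition, just left unexpanded. You have simply written out the details the author calls ``immediate.''
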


\begin{proof}
This is immediate from the definition \eqref{eq:general-function}, since the slope of $f_{2m}$ over the interval $[k/2^{2m},(k+1)/2^{2m}]$ is equal to $D_{2m}(x_0)=0$, and $f(x_0)=f_{2m}(x_0)$. In fact, $g=f_{{\bf r}'}$, where ${\bf r}' =(r_{2m},r_{2m+1},\dots)$.
\end{proof}

\begin{definition} \label{def:humps}
{\rm
For a balanced dyadic rational $x_0=k/2^{2m}$, define 
\begin{equation*}
I(x_0):=[k/2^{2m},(k+1)/2^{2m}],
\end{equation*}
and
\begin{equation*}
H(x_0):=\{(x,f(x)): x\in I(x_0)\}.
\end{equation*}
By Lemma \ref{lem:similar-copies}, $H(x_0)$ is a similar copy of $\mathcal{G}_g$ for some $g\in\TT_{\pm}$; we call $H(x_0)$ a {\em hump} and $m$ its {\em order}. The {\em generation} of $H(x_0)$ is the generation of the balanced dyadic rational $x_0$, and a hump of generation $1$ will also be called a {\em first-generation hump}. By convention, $\mathcal{G}_f$ itself is a hump of generation $0$. If $D_j(x_0)\geq 0$ for every $j\leq 2m$, we call $H(x_0)$ a {\em principal hump}. We denote the set of all principal humps by $\HH_p$.
}
\end{definition}

Each hump of generation $n$ is contained in a hump of generation $n-1$. Different humps may have different shapes, but all are left-to-right symmetric, and any two humps of the same order $m$ are identical copies of each other. We will need to count humps and principal humps of a given order. To this end, we introduce the Catalan numbers
\begin{equation*}
C_n:=\frac{1}{n+1}\binom{2n}{n}, \qquad n\in\ZZ_+.
\end{equation*}
It is well known that
\begin{equation}
\sum_{n=0}^\infty C_n\left(\frac14\right)^n=2.
\label{eq:Catalan-gf}
\end{equation}

\begin{lemma} \label{lem:hump-count} 
Let $m\in\NN$.
\begin{enumerate}[(i)]
\item There are $\binom{2m}{m}$ humps of order $m$.
\item There are $C_m$ principal humps of order $m$.
\end{enumerate}
\end{lemma}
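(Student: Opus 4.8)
The plan is to set up an $\mathbf{r}$-independent bijection between humps of order $m$ and $\pm1$-valued strings of length $2m$ satisfying a partial-sum condition, and then invoke standard binomial and Catalan enumerations.

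First I would recall that, by Definitions \ref{def:balanced} and \ref{def:humps}, a hump of order $m$ is $H(x_0)$ for a balanced dyadic rational $x_0=k/2^{2m}=0.\eps_1\eps_2\dots\eps_{2m}$, i.e.\ a length-$2m$ binary string $(\eps_1,\dots,\eps_{2m})$ with $D_{2m}(x_0)=0$. Since the base interval $I(x_0)=[k/2^{2m},(k+1)/2^{2m}]$ has length $2^{-2m}$, distinct values of $k\in\{0,1,\dots,2^{2m}-1\}$, equivalently distinct strings, give distinct humps of order $m$. Hence it suffices to count the length-$2m$ strings with $D_{2m}=0$, and, for part (ii), those that in addition satisfy $D_j(x_0)\ge 0$ for all $1\le j\le 2m$ (the principal-hump condition).

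The key observation is a change of variables that eliminates all dependence on ${\bf r}$. For each $j$ set $\sigma_j:=r_{j-1}(-1)^{\eps_j}$, so that $D_j(x_0)=\sigma_1+\dots+\sigma_j$. Because $r_{j-1}\in\{-1,1\}$ is fixed, the map $\eps_j\mapsto\sigma_j$ is a bijection from $\{0,1\}$ onto $\{-1,1\}$ (namely $\eps_j=0\mapsto r_{j-1}$ and $\eps_j=1\mapsto -r_{j-1}$). Therefore $(\eps_1,\dots,\eps_{2m})\mapsto(\sigma_1,\dots,\sigma_{2m})$ is a bijection of $\{0,1\}^{2m}$ onto $\{-1,1\}^{2m}$, and the conditions on the $D_j$ become conditions on the partial sums of a $\pm1$ sequence that do not depend on ${\bf r}$. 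For (i), $D_{2m}(x_0)=0$ becomes $\sum_{j=1}^{2m}\sigma_j=0$, i.e.\ exactly $m$ of the $\sigma_j$ equal $+1$; there are $\binom{2m}{m}$ such sign strings. For (ii), the extra requirement $D_j(x_0)\ge 0$ for all $j$ says that every partial sum $\sigma_1+\dots+\sigma_j$ is nonnegative while the total is $0$. These are precisely the $\pm1$ sequences encoding Dyck paths of semilength $m$ (up-steps $+1$, down-steps $-1$, staying weakly above the axis and returning to it), which are counted by the Catalan number $C_m$ via the classical ballot-type enumeration.

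There is no genuinely hard step here; the only point requiring care is the observation that $\eps_j\mapsto\sigma_j$ is a bijection for each fixed sign $r_{j-1}$, which is exactly what makes both counts independent of the particular sequence ${\bf r}$. Once this normalization is in place, (i) is an immediate binomial count and (ii) is the standard Dyck-path/Catalan enumeration.
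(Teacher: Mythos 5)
Your proof is correct and follows essentially the same route as the paper: both encode a hump of order $m$ as the $\pm1$ sequence of increments of $D_j$, count the sequences summing to zero by $\binom{2m}{m}$, and count those with nonnegative partial sums by the Catalan number $C_m$. Your version merely makes explicit the $\mathbf{r}$-independent bijection $\eps_j\mapsto r_{j-1}(-1)^{\eps_j}$ that the paper leaves implicit in the phrase ``corresponds uniquely to a path.''
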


\begin{proof}
Each hump of order $m$ corresponds uniquely to a path of $m$ steps starting at $(0,0)$, taking steps $(1,1)$ or $(1,-1)$, and ending at $(2m,0)$. There are $\binom{2m}{m}$ such paths, proving (i). It is well known that exactly $C_m$ of these paths stay on or above the horizontal axis (see Feller \cite[p.~73]{Feller}), which gives (ii). 
\end{proof}

The projections of the first-generation humps onto the $x$-axis have disjoint interiors, and the union of these projections is dense in $[0,1]$. In fact, we have more:

\begin{lemma} \label{lem:dense-x-union}
The union
\begin{equation*}
U_n:=\bigcup_{x_0\in\BB_n}I(x_0)
\end{equation*}
is dense in $[0,1]$ for each $n\in\NN$.
\end{lemma}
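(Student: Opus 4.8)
The plan is to prove something stronger, namely that $U_n$ has full Lebesgue measure in $[0,1]$; density then follows immediately, since a full-measure subset of $[0,1]$ cannot avoid any nonempty open interval $J$ (otherwise $J$ would be a positive-measure subset of the null complement of $U_n$). So the whole argument reduces to two facts about the humps of generation $n$: that their projections $I(x_0)$, $x_0\in\BB_n$, have pairwise disjoint interiors, and that the sum of their lengths equals $1$.

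For the disjointness I would argue via dyadic nesting. If $x_0,x_0'\in\BB_n$ have orders $m\le m'$, then the dyadic intervals $I(x_0)$ and $I(x_0')$ either have disjoint interiors or are nested, $I(x_0')\subseteq I(x_0)$; in the latter case $x_0'$ shares the first $2m$ binary digits of $x_0$, so $D_j(x_0')=D_j(x_0)$ for $j\le 2m$. Thus the walk of $x_0'$ already attains its full quota of $n$ zeros by time $2m$ (because $x_0\in\BB_n$), and the balance condition $D_{2m'}(x_0')=0$ would create an $(n+1)$-st zero unless $m'=m$, i.e. $x_0'=x_0$. Hence distinct generation-$n$ humps can meet only at endpoints.

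For the total length I would extend the lattice-path bijection of Lemma \ref{lem:hump-count}: a hump of order $m$ and generation $n$ corresponds to a $\pm1$ walk of length $2m$ from $0$ to $0$ touching $0$ exactly $n$ times at positive times, with $|I(x_0)|=4^{-m}$. Splitting such a walk at its zeros exhibits it uniquely as an ordered concatenation of $n$ excursions, each staying strictly positive or strictly negative in its interior; an excursion of semilength $j$ can be chosen in $2C_{j-1}$ ways. With the single-excursion generating function
\begin{equation*}
E(z)=\sum_{j=1}^\infty 2\,C_{j-1}\,z^{\,j},
\end{equation*}
the concatenation structure gives $\sum_{x_0\in\BB_n}|I(x_0)|=\sum_m a_m^{(n)}4^{-m}=E(1/4)^n$, where $a_m^{(n)}$ counts the generation-$n$ humps of order $m$. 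Finally $E(1/4)=\tfrac12\sum_{k=0}^\infty C_k 4^{-k}=1$ by \eqref{eq:Catalan-gf}, so the total length is $1^n=1$.

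Combining these, $U_n$ has measure $1$ and the density conclusion follows as above. The step I expect to be most delicate is the bookkeeping in the total-length computation: one must check that the excursion decomposition at the zeros is a genuine bijection (so that the generating functions simply multiply) and that the count $2C_{j-1}$ per excursion is correct, after which the evaluation $E(1/4)=1$ lets the Catalan identity \eqref{eq:Catalan-gf} do all the work. An alternative that avoids the measure computation is induction on $n$ via Lemma \ref{lem:similar-copies}: inside each first-generation hump the graph is a rescaled copy of some $g\in\TT_{\pm}$, so a dense set of generation-$(n-1)$ humps of $g$ lifts to generation-$n$ humps of $f$ filling that hump densely, with the case $n=1$ as base case. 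The measure argument is cleaner, though, since it sidesteps tracking the shifted sign sequences through the induction.
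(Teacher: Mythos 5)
Your proof is correct, and it reaches the paper's conclusion by the same high-level strategy (show $U_n$ has full measure, then deduce density) but with a different justification of the measure statement. The paper's proof is a one-liner: viewing $\{D_j\}$ under Lebesgue measure as a symmetric simple random walk, it cites recurrence to conclude that almost every $x$ has at least $n$ indices $j$ with $D_j(x)=0$, hence lies in some $I(x_0)$ with $x_0\in\BB_n$. You instead verify the same fact by hand: the disjoint-interior argument via dyadic nesting is sound (a nested pair $I(x_0')\subseteq I(x_0)$ with $m'>m$ would force an $(n+1)$-st zero at time $2m'$), the excursion decomposition at the zeros of the walk is indeed a bijection onto ordered $n$-tuples of strictly signed excursions, the count $2C_{j-1}$ per excursion of semilength $j$ is right, and $E(1/4)=\tfrac12\sum_k C_k4^{-k}=1$ by \eqref{eq:Catalan-gf}. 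What your version buys is self-containedness and an exact identity ($\sum_{x_0\in\BB_n}|I(x_0)|=1$ for each $n$, refining Lemma \ref{lem:hump-count}); what it costs is length, since the recurrence of the one-dimensional symmetric walk is standard and your generating-function evaluation is essentially an elementary re-proof of the statement that the walk returns to $0$ with probability one. Your suggested inductive alternative via Lemma \ref{lem:similar-copies} would also work, but as you note it requires carrying the shifted sign sequences through the induction, and density alone is weaker than what either measure argument delivers.
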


\begin{proof}
Considered as a stochastic process on $[0,1]$ with Lebesgue measure, $\{D_n\}$ is a symmetric simple random walk, and therefore returns to $0$ infinitely many times with probability one. This implies that for each $n$ the set $U_n$ has full measure in $[0,1]$. Hence, $U_n$ is dense.
\end{proof}

\section{The key proposition} \label{sec:key-proposition}

Recall that $\BB_n$ denotes the set of balanced dyadic rationals of generation $n$. Define a set
\begin{equation*}
X:=[0,1]\backslash \bigcup_{x_0\in\BB_1} I^\circ(x_0),
\end{equation*}
where $I^\circ(x_0)$ denotes the interior of $I(x_0)$. Observe that $X$ is closed. For $x_0\in\BB_n$, define the subset $X(x_0)$ of $I(x_0)$ similarly by
\begin{equation*}
X(x_0):=I(x_0)\backslash \bigcup_{x_1\in\BB_{n+1}} I^\circ(x_1).
\end{equation*}
We call the graph of $f$ restricted to $X(x_0)$ a {\em truncated hump}, and denote it by $H^t=H^t(x_0)$.

Since $X$ is symmetric, the restriction of $f$ to $X$ is symmetric as well. The importance of $X$ is made clear by the following proposition, which is key to proving the main results of this paper.

\begin{proposition} \label{lem:bijection}
\begin{enumerate}[(i)]
\item We have
\begin{equation*}
f(X)=\begin{cases}
[0,1/2], & \mbox{if $r_0=1$}\\
[-1/2,0], & \mbox{if $r_0=-1$}.
\end{cases}
\end{equation*}
\item If $y\not\in f(\BB_1)$, then $L_f(y)$ intersects $X\cap[0,1/2]$ in at most one point.
\end{enumerate}
\end{proposition}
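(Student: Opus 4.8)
The plan is to deduce both statements from a single \emph{monotonicity lemma}: when $r_0=1$, the restriction of $f$ to $K:=X\cap[0,1/2]$ is non-decreasing. First I would reduce to $r_0=1$. Replacing $\mathbf r$ by $-\mathbf r$ sends $f$ to $-f$ and each $D_n$ to $-D_n$, so it preserves the balanced condition $D_{2m}=0$, hence fixes $\BB_1$ and the set $X$, while negating the range and $f(\BB_1)$; thus the case $r_0=-1$ follows from $r_0=1$ by reflection. Next, the symmetry $f(1-x)=f(x)$ together with the symmetry of $X$ gives $f(X)=f(K)$, so it suffices to study $K$. A non-dyadic $x\in(0,1/2)$ lies in $K$ exactly when its walk never returns to $0$ (equivalently $D_n(x)\ge 1$ for all $n\ge 1$), since otherwise the first return determines a first-generation hump $I^\circ(x_0)$, $x_0\in\BB_1$, containing $x$; and the endpoints $0,1/2$ lie in $K$ with $f(0)=0$, $f(1/2)=1/2$.

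Granting monotonicity, statement (i) is immediate: on $K$ we have $0=f(0)\le f\le f(1/2)=1/2$, and $f(K)$ is an \emph{interval} because $f$ has no jumps across the gaps of $K$. Indeed, every gap of $K$ is the interior of a first-generation hump $I(x_0)$, and since $x_0$ is balanced, Lemma \ref{lem:similar-copies} shows $f$ takes the same value $f(x_0)$ at both endpoints of $I(x_0)$. A non-decreasing, continuous function whose only intervals of constancy sit across such gaps therefore maps the compact set $K$ onto $[f(0),f(1/2)]=[0,1/2]$. For (ii), suppose $y\notin f(\BB_1)$ and that $\alpha<\beta$ both lie in $K\cap L_f(y)$. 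Since the union of first-generation humps is dense (Lemma \ref{lem:dense-x-union}), the interval $(\alpha,\beta)$ meets some gap $I^\circ(x_0)$, $x_0\in\BB_1$, whose left endpoint $c$ satisfies $\alpha\le c<\beta$; monotonicity forces $f(c)=y$, whence $y=f(x_0)\in f(\BB_1)$, a contradiction. Hence $L_f(y)$ meets $K$ in at most one point.

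It remains to prove the monotonicity lemma, which is the main obstacle. By continuity it suffices to compare $f(x)$ and $f(x')$ for two non-dyadic points $x<x'<1/2$ of $K$ (the endpoint $1/2$ is then reached as a left limit). Let $j$ be the first index where their expansions differ, so $\eps_i(x)=\eps_i(x')$ for $i<j$, $\eps_j(x)=0$, $\eps_j(x')=1$, and $d:=D_{j-1}(x)=D_{j-1}(x')\ge 1$ (here $j\ge 2$ since both start with $\eps_1=0$). Writing $p$ for the common prefix, an extension of Lemma \ref{lem:similar-copies} to non-balanced prefixes gives, on the level-$(j-1)$ dyadic interval determined by $p$, the decomposition $f=f_{j-1}+2^{-(j-1)}g\bigl(2^{j-1}(\cdot-p)\bigr)$, where $f_{j-1}$ is linear of slope $d$ and $g=f_{\mathbf r'}\in\TT_{\pm}$ with $\mathbf r'=(r_{j-1},r_j,\dots)$. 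Setting $u:=2^{j-1}(x-p)\in[0,1/2]$ and $u':=2^{j-1}(x'-p)\in[1/2,1]$, a short computation shows $f(x')-f(x)=2^{-(j-1)}\bigl(\psi(u')-\psi(u)\bigr)$ for the auxiliary function $\psi(v):=g(v)+dv$, so the lemma reduces to the inequality $\psi(u')\ge\psi(u)$.

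The crux is that $\psi(u')\ge\psi(u)$ holds precisely because $x,x'\in X$: the never-return condition $D_N(x),D_N(x')\ge 1$ for all $N$ constrains the admissible tails $u,u'$ so that the monotone drift $d$ dominates the oscillation of $g$ (for the crude, unconstrained $u,u'$ the inequality genuinely fails, so the $X$-membership cannot be dropped). I expect this to be the hardest step and would attack it by a secondary induction on the walk height $d$: one peels off one excursion of the strictly positive walk, reapplies the same prefix decomposition inside the resulting sub-hump, and lowers $d$, the base case $d=1$ being handled by the left--right symmetry of $g$ and the elementary bound $g\le 1/2$ on the relevant half. The remaining points---that $0,1/2\in K$, that hump endpoints are dense in $K$, and that passing from non-dyadic points to all of $K$ is legitimate by continuity---are routine and use only Lemma \ref{lem:dense-x-union} and the symmetry of $X$.
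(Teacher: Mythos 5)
Your skeleton is the right one and matches the paper's in essence: everything hinges on showing that $f$ is non-decreasing on $X\cap[0,1/2]$, and your derivations of (i) and (ii) from that monotonicity (no jumps across the gaps of $X$ because $f$ agrees at the two endpoints of each $I(x_0)$; density of $\bigcup I^\circ(x_0)$ forcing $y=f(x_0)$ for some $x_0\in\BB_1$ if the level set met $X\cap[0,1/2]$ twice) are sound and essentially identical to the paper's. The reduction of $r_0=-1$ to $r_0=1$ is also fine.

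The genuine gap is that you never prove the monotonicity lemma, which you yourself identify as ``the main obstacle.'' Your plan --- compare two points via the prefix decomposition, reduce to $\psi(u')\ge\psi(u)$ with $\psi(v)=g(v)+dv$ under the never-return constraints, and then run ``a secondary induction on the walk height $d$'' by ``peeling off one excursion'' --- is only a declaration of intent: it is not specified what the inductive statement is, why peeling an excursion lowers $d$ or preserves the constraints, or why the base case follows from ``the left--right symmetry of $g$ and the elementary bound $g\le 1/2$.'' As stated it cannot be checked, and the constrained inequality $\psi(u')\ge\psi(u)$ is exactly as hard as the lemma itself. The paper avoids this entirely with a much softer argument: it defines $f^*$ to equal $f$ on $X$ and to equal the constant $f(x_0)$ on each first-generation interval $I(x_0)$, together with piecewise-linear approximants $f_n^*$ that permanently freeze each first-generation flat segment. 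An induction on $n$ shows each $f_n^*$ is non-decreasing on $[0,1/2]$: in passing from $f_n^*$ to $f_{n+1}^*$ every frozen horizontal segment stays put, and every segment of slope $m\ge 1$ splits into segments of slopes $m+1$ and $m-1\ge 0$ meeting it at its endpoints (a segment not yet frozen over $[0,1/2]$ always has slope $D_n\ge 1$, since the first return of $D_n$ to $0$ creates a frozen segment). Uniform convergence $f_n^*\to f^*$ then gives monotonicity of $f^*$, hence of $f$ on $X\cap[0,1/2]$, with no need for the two-point inequality. You should either supply a complete proof of your constrained inequality or replace that step with an argument of this approximant type.
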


The proof of the proposition uses the following auxiliary functions. 
Let
\begin{equation*}
f^*(x):=\begin{cases}
f(x), & \mbox{if $x\in X$},\\
f(x_0), & \mbox{if $x\in I(x_0)$, where $x_0\in\BB_1$}.
\end{cases}
\end{equation*}
Note that $f^*$ is well defined, because if $x$ is an endpoint of an interval $I(x_0)$, we have $f(x)=f(x_0)$. Define piecewise linear approximants of $f^*$ by
\begin{equation*}
f_n^*(x):=\begin{cases}
f(x_0), & \mbox{if $x\in I(x_0)$ with $x_0=k/2^{2m}\in\BB_1$ and $2m\leq n$},\\
f_n(x), & \mbox{otherwise}.
\end{cases}
\end{equation*}
Thus, $f_n^*$ permanently ``fixes" each first-generation flat segment in the piecewise linear approximations of $f$.

\begin{lemma} \label{lem:f-star}
Suppose $r_0=1$. The functions $f^*$ and $f_n^*$ have the following properties:
\begin{enumerate}[(i)]
\item $f_n^*$ is continuous and nondecreasing on $[0,1/2]$ for every $n$;
\item $f_n^*\to f^*$ uniformly on $[0,1]$;
\item $f^*$ is continuous and nondecreasing on $[0,1/2]$.
\end{enumerate}
\end{lemma}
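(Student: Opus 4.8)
The natural order is to establish (i) and (ii) first and then obtain (iii) as a soft consequence. For (i) the plan is to compute the slope of $f_n^*$ at each non-dyadic $x\in[0,1/2]$ and show it is nonnegative; together with continuity this forces monotonicity. At a non-dyadic $x$ lying in no \emph{completed} first-generation hump (i.e.\ no $I(x_0)$ with $x_0$ of order $m$ and $2m\le n$) the slope of $f_n^*$ equals that of $f_n$, namely $D_n(x)$, while on completed humps $f_n^*$ is flat. I would then read $\{D_j(x)\}_{j\ge 1}$ as a $\pm1$ walk, as in the proof of Lemma \ref{lem:dense-x-union}, started at $D_1(x)=r_0(-1)^{\eps_1}=1$ (using $r_0=1$ and $\eps_1=0$ for non-dyadic $x<1/2$), and let $\sigma(x)$ be its first return time to $0$. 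If $\sigma(x)>n$, the walk has stayed strictly positive through time $n$, so $D_n(x)>0$ and $x$ lies in no completed hump, giving positive slope. If $\sigma(x)=2m\le n$, then the truncation $x_0=0.\eps_1\ldots\eps_{2m}$ is a generation-$1$ balanced dyadic rational of order $m$ with $x\in I^\circ(x_0)$, and since $2m\le n$ this hump is flattened, giving slope $0$. Either way the slope is $\ge 0$. Continuity of $f_n^*$ is routine: $f_n$ is continuous and piecewise linear, and replacing it by the constant $f(x_0)$ on a completed hump is consistent at the (dyadic, level $\le n$) endpoints because $f_n$ already equals $f(x_0)$ there.

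For (ii) I would start from the tail estimate $\|f-f_n\|_\infty\le 2^{-n}$ coming from \eqref{eq:general-function} and $|\phi|\le 1/2$, and then compare $f_n^*$ with $f^*$ piece by piece. On $X$ and on completed humps the pairs $(f_n^*,f^*)$ reduce respectively to $(f_n,f)$ and to the common value $f(x_0)$, so there $|f_n^*-f^*|\le\|f-f_n\|_\infty$. The only genuine discrepancy occurs on humps $I(x_0)$ of order $m$ with $2m>n$, which are flattened in $f^*$ but not yet in $f_n^*$; on such a hump Lemma \ref{lem:similar-copies} writes $f-f(x_0)=2^{-2m}g\bigl(2^{2m}(\cdot-x_0)\bigr)$ with $g\in\TT_\pm$, and since $g(0)=0$ and $\height(g)\le 2/3$ we get $|f-f(x_0)|\le\tfrac23 2^{-2m}$ there. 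As $2m>n$, this contributes at most $\|f-f_n\|_\infty+\tfrac23 2^{-n}$, and taking the supremum gives $\|f_n^*-f^*\|_\infty\to 0$.

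Part (iii) is then immediate: being a uniform limit of the continuous functions $f_n^*$, the function $f^*$ is continuous, and being a pointwise limit of functions nondecreasing on $[0,1/2]$, it is itself nondecreasing there.

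The main obstacle is Part (i) — specifically the claim that every negative-slope portion of $f_n$ on $[0,1/2]$ has already been flattened in $f_n^*$. The first-return description makes this transparent, but the bookkeeping at dyadic points and at incomplete humps (order $m$ with $2m>n$, where the slope is still the positive $D_n(x)$) is delicate, so I would run the whole argument through the single quantity $\sigma(x)$ and the dichotomy $\sigma(x)\le n$ versus $\sigma(x)>n$.
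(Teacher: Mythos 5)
Your proof is correct, and it arrives at the lemma by a route that differs from the paper's in two places worth noting. For part (i) the paper argues by induction on $n$: in the transition from $f_n^*$ to $f_{n+1}^*$, each horizontal segment stays fixed and each segment of strictly positive integer slope $m$ is replaced by two segments of slopes $m+1$ and $m-1$ meeting it at its endpoints, so nonnegativity of all slopes on $[0,1/2]$ is preserved. You instead verify the slope of $f_n^*$ directly at each non-dyadic $x\in(0,1/2)$ via the first return time $\sigma(x)$ of the walk $\{D_j(x)\}$ started at $D_1(x)=1$: either $\sigma(x)=2m\le n$, in which case $x$ lies in an already-flattened first-generation hump and the slope is $0$, or $\sigma(x)>n$, in which case the slope is $D_n(x)\ge 1$. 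The two arguments encode the same combinatorial fact, but yours is non-inductive and makes explicit why flattening only the \emph{first-generation} humps suffices; the price is the extra bookkeeping you acknowledge (continuity at the dyadic endpoints, constancy of the slope on each level-$n$ dyadic interval), all of which checks out. For part (ii) the decomposition into the three cases ($x\in X$, completed humps, incomplete humps of order $m$ with $2m>n$) is exactly the paper's; the only difference is that in the third case the paper invokes the uniform continuity of $f$ together with $|x-x_0|<2^{-n}$, whereas you use Lemma \ref{lem:similar-copies} and the height bound of Theorem \ref{prop:height} to get the explicit estimate $|f-f(x_0)|\le\tfrac23\cdot 2^{-2m}$ on $I(x_0)$, which upgrades the conclusion to a quantitative rate $\|f_n^*-f^*\|_\infty=O(2^{-n})$. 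Part (iii) is obtained identically in both treatments.
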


\begin{proof}
Statement (i) is obvious for $n=1$, as $f_1^*=f_1$. Assume the statement holds for $n\in\NN$, and suppose that $r_n=1$. In the transition from $f_n^*$ to $f_{n+1}^*$, each horizontal line segment in the graph of $f_n^*$ stays fixed, and each line segment of strictly positive integer slope $m$ is replaced with two connecting line segments of slopes $m+1$ and $m-1$, respectively, which meet the original line segment at its endpoints. The case $r_n=-1$ is similar. Thus, it follows inductively that (i) holds for every $n\in\NN$.

Statement (ii) follows from the uniform convergence of $f_n$ to $f$ by the following argument. Let $x\in[0,1]$. If $x\in X$, then $f^*(x)-f_n^*(x)=f(x)-f_n(x)$. Suppose instead that $x\in I(x_0)$, with $x_0=k/2^{2m} \in\BB_1$. If $2m\leq n$, then the flat segment in the graph of $f_{2m}$ above $I(x_0)$ has already been fixed by $f_n^*$, and so $f^*(x)-f_n^*(x)=f(x_0)-f(x_0)=0$. Finally, if $2m>n$, then 
\begin{equation*}
|f^*(x)-f_n^*(x)|=|f(x_0)-f_n(x)|\leq |f(x_0)-f(x)|+|f(x)-f_n(x)|.
\end{equation*}
Since $|x-x_0|\leq 2^{-2m}<2^{-n}$ in this case, $|f(x_0)-f(x)|$ can be made uniformly small by choosing $n$ sufficiently large, in view of the uniform continuity of $f$. Since $f_n\to f$ uniformly, an examination of the three cases above yields statement (ii).

Statement (iii), of course, is a direct consequence of (i) and (ii).
\end{proof}

\begin{proof}[Proof of Proposition \ref{lem:bijection}]
Assume throughout that $r_0=1$; the case $r_0=-1$ is entirely similar. 

We claim first that $f(X)=f^*([0,1])$. If $y\in f(X)$, then the definition of $f^*$ immediately gives $y\in f^*(X)\subset f^*([0,1])$. Assume now that $y\in f^*([0,1])$, and let $x\in[0,1]$ such that $f^*(x)=y$. If $x\in X$ we are done; otherwise, $x\in I(x_0)$ for some $x_0\in\BB_1$, in which case $f(x_0)=f^*(x)=y$. Since $x_0\in X$, the claim is proved.

Now observe that $f^*(0)=0$ and $f^*(1/2)=1/2$. Lemma \ref{lem:f-star}(iii) and the symmetry of the graph of $f$ therefore give $f^*([0,1])=[0,1/2]$, proving (i).

To prove (ii), suppose there exist $x,x'\in X\cap[0,1/2]$ with $x<x'$ such that $f(x)=f(x')=y$. By Lemma \ref{lem:dense-x-union} the interval $(x,x')$ intersects some interval $I(x_0)$ with $x_0\in\BB_1$. Since $x\in X$, $x$ can not lie in the interior of $I(x_0)$, and hence $x\leq x_0<x'$. Now let $n\geq 2m$, where $m$ is the order of $x_0$. Then
\begin{equation*}
f_n(x')=f_n^*(x')\geq f_n^*(x_0)\geq f_n^*(x)=f_n(x). 
\end{equation*}
But $f_n^*(x_0)=f_n(x_0)$, so letting $n\to\infty$ we obtain $f(x')\geq f(x_0)\geq f(x)$. But this means $f(x_0)=y$, so $y\in f(\BB_1)$. This proves (ii).
\end{proof}

In what follows, let $l_y$ denote the horizontal line at level $y$. That is,
\begin{equation*}
l_y:=\{(x,y): x\in\RR\}.
\end{equation*}

\begin{corollary} \label{cor:hump-intersections}
Let $H^t$ be a truncated hump of generation $n$ and order $m$. Then:
\begin{enumerate}[(i)]
\item $\pi_Y(H^t)$ is an interval, and $\lambda(\pi_Y(H^t))=(1/2)(1/4)^m$;
\item If $y\not\in f(\BB_{n+1})$, then $|l_y\cap H^t|\leq 2$.
\end{enumerate}
\end{corollary}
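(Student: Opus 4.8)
The plan is to deduce both statements from Proposition~\ref{lem:bijection}, applied not to $f$ itself but to the rescaled function supplied by Lemma~\ref{lem:similar-copies}. Write $x_0=k/2^{2m}\in\BB_n$ for the base point of $H^t$, and let $g\in\TT_{\pm}$ be the function with ${\bf r}'=(r_{2m},r_{2m+1},\dots)$ guaranteed by that lemma, so that
\begin{equation*}
f(x)=f(x_0)+2^{-2m}g\bigl(2^{2m}(x-x_0)\bigr),\qquad x\in I(x_0).
\end{equation*}
The affine map $\Phi(x):=2^{2m}(x-x_0)$ sends $I(x_0)$ onto $[0,1]$, and together with the vertical scaling it realizes $H(x_0)$ as a similar copy of $\GG_g$ with ratio $2^{-2m}$. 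Everything then comes down to transporting the sets $X(x_0)$, $\BB_{n+1}$, and $L_f(y)$ through $\Phi$ and invoking the corresponding objects for $g$.

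The first and most important step is to check that $\Phi$ carries the truncation $X(x_0)$ exactly onto the set $X_g:=[0,1]\setminus\bigcup_{z\in\BB_1^g}I^\circ(z)$ that Proposition~\ref{lem:bijection} concerns for $g$ (here $\BB_1^g$ denotes the generation-$1$ balanced dyadic rationals of $g$). This rests on the additivity of the slope process at a balanced point: if $x_1\in I(x_0)$, then its first $2m$ binary digits coincide with those of $x_0$, so $D_{2m}(x_1)=D_{2m}(x_0)=0$ and $D_{2m+j}(x_1)=D_j^g(\Phi(x_1))$ for all $j\ge 1$, where $D^g$ is the slope process of $g$. Hence $x_1\in\BB_{n+1}$ if and only if $\Phi(x_1)\in\BB_1^g$; the generation-$(n+1)$ humps contained in $H(x_0)$ are precisely the $\Phi$-preimages of the first-generation humps of $\GG_g$; and therefore $\Phi\bigl(X(x_0)\bigr)=X_g$.

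Granting this, part (i) is immediate: since $\pi_Y(H^t)=f(X(x_0))=f(x_0)+2^{-2m}g(X_g)$, Proposition~\ref{lem:bijection}(i) applied to $g$ shows that $g(X_g)$ is an interval of length $1/2$ (either $[0,1/2]$ or $[-1/2,0]$ according to the sign of $r_{2m}$), whence $\pi_Y(H^t)$ is an interval with $\lambda(\pi_Y(H^t))=2^{-2m}\cdot\tfrac12=(1/2)(1/4)^m$. For part (ii), put $y'=2^{2m}(y-f(x_0))$; then $x\in X(x_0)$ satisfies $f(x)=y$ exactly when $\Phi(x)\in X_g$ satisfies $g(\Phi(x))=y'$, so $|l_y\cap H^t|=|L_g(y')\cap X_g|$. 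Since each $z\in\BB_1^g$ is the image under $\Phi$ of some $x_1\in\BB_{n+1}\cap I(x_0)$ with $g(z)=2^{2m}(f(x_1)-f(x_0))$, the assumption $y\notin f(\BB_{n+1})$ guarantees $y'\notin g(\BB_1^g)$. Proposition~\ref{lem:bijection}(ii) applied to $g$ then gives at most one point of $L_g(y')$ in $X_g\cap[0,1/2]$; the symmetry $g(1-u)=g(u)$ together with the symmetry of $X_g$ gives at most one point in $X_g\cap[1/2,1]$; and as the two halves meet only at $1/2$, we conclude $|l_y\cap H^t|\le 2$.

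The only real obstacle is the bookkeeping in the second paragraph: one must verify that the truncation operation and the generation-indexing of balanced dyadic rationals commute with the self-similarity $\Phi$, so that ``generation $n+1$ inside $H(x_0)$'' matches ``generation $1$ for $g$'' and $f(\BB_{n+1})$ excludes exactly the levels that $g(\BB_1^g)$ would contribute. Once this identification is in place, both parts are simply Proposition~\ref{lem:bijection} read off at scale $2^{-2m}$, with the factor of $2$ in (ii) produced by the left-to-right symmetry of the hump.
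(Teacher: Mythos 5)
Your proof is correct and takes essentially the same approach as the paper: the paper's own proof is a three-line sketch that realizes $H$ as a scaled copy of $\GG_g$, notes that generation-$(n+1)$ humps inside $H$ correspond to first-generation humps of $g$, and then applies Proposition~\ref{lem:bijection} to $g$ together with the left-to-right symmetry of the hump. You have simply carried out in full the bookkeeping (the conjugation by $\Phi$, the identification $\Phi(X(x_0))=X_g$, and the translation of the hypothesis $y\notin f(\BB_{n+1})$ into $y'\notin g(\BB_1^g)$) that the paper leaves implicit.
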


\begin{proof}
The hump $H$ is a small-scale similar copy of $\GG_g$ for some $g\in\TT_{\pm}$, reduced by a factor $4^m$. And any hump of generation $n+1$ inside $H$ corresponds to a hump of generation $1$ in the graph of $g$. Hence, the corollary follows upon applying Proposition \ref{lem:bijection} to $g$ and using the symmetry of $H$.
\end{proof}

\section{Proofs of Theorems \ref{thm:finite-ae} and \ref{thm:uncountables}} \label{sec:proofs}

\begin{lemma} \label{lem:leading-humps}
\begin{enumerate}[(i)]
\item For every hump $H$ there is a principal hump $H'$ of the same order and generation as $H$, such that $\pi_Y(H)=\pi_Y(H')$.
\item For every principal hump $H'$, there are only finitely many humps $H$ such that $\pi_Y(H)=\pi_Y(H')$.
\end{enumerate}
\end{lemma}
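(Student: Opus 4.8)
The plan is to understand humps combinatorially via the Dyck-path correspondence introduced in Lemma \ref{lem:hump-count}. Recall that a hump of order $m$ corresponds to a lattice path of $m$ up-steps and $m$ down-steps from $(0,0)$ to $(2m,0)$, namely the path traced out by the partial sums $D_1(x_0),D_2(x_0),\dots,D_{2m}(x_0)$ of the signed values $r_{j-1}(-1)^{\eps_j}$. The \emph{generation} of the hump is the number of times this path returns to zero (i.e. the number of indices $j\le 2m$ with $D_j(x_0)=0$), and a \emph{principal} hump is one whose path never goes below zero. The key fact I would exploit is that, by Lemma \ref{lem:similar-copies}, the shape of the hump $H(x_0)$ above $I(x_0)$ depends only on the tail $(r_{2m},r_{2m+1},\dots)$, hence only on $m$; so $\pi_Y(H)$, which is the $y$-extent of the hump, is the same for every order-$m$ hump and equals $[\,0,\ \height(g)/4^m\,]$ translated vertically, where $g=f_{\mathbf r'}$ with $\mathbf r'=(r_{2m},\dots)$. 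In particular $\pi_Y(H)$ depends on $H$ only through its order $m$ and its base-height $f(x_0)$.

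\textbf{Proof of part (i): reflecting a path to make it principal.}
Given an arbitrary hump $H=H(x_0)$ of order $m$ and generation $n$, I would produce a principal hump $H'$ of the same order, generation, and vertical extent by a standard path-reflection argument. Write the Dyck path of $x_0$ and locate its successive returns to zero, which split it into $n$ excursions, each either entirely above or entirely below the axis. For every excursion lying below the axis, reflect it across the horizontal axis; this turns each down-first excursion into an up-first one, leaving the set of return-times (hence the generation $n$) unchanged and producing a path that stays on or above zero, i.e. a principal hump $H'$ of the same order $m$ and generation $n$. The point is that reflecting a below-axis excursion corresponds, under Lemma \ref{lem:similar-copies} applied within that sub-hump, merely to replacing the local sign-block by its negative, which reflects that sub-hump in the $x$-axis about its own base level; since humps are left–right symmetric and $\pi_Y$ records only the \emph{height} of the excursion above (or depth below) its base, the total vertical extent $\pi_Y(H)$ is preserved. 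I would make this precise by checking that $\height$ is invariant under global sign-flip of $\mathbf r'$ (immediate from \eqref{eq:graph-height} by the symmetry $\tau_{2j-1}\leftrightarrow\tau_{1-2j}$ of Theorem \ref{prop:height}), so each reflected excursion contributes the same interval of $y$-values, whence $\pi_Y(H)=\pi_Y(H')$.

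\textbf{Proof of part (ii): finiteness via the discrete data.}
For a fixed principal hump $H'$ of order $m$, generation $n$, and base-height $c:=f(x_0')$, I would argue that any hump $H$ with $\pi_Y(H)=\pi_Y(H')$ must have the same order $m$: indeed $\lambda(\pi_Y(H))=\tfrac12(1/4)^{m}$ by Corollary \ref{cor:hump-intersections}(i) (applied with the appropriate generation), and this Lebesgue-length determines $m$ uniquely, since $\tfrac12(1/4)^{m}$ is strictly decreasing in $m$. Once $m$ is pinned down, $H=H(x_0)$ is the image above some interval $I(x_0)$ with $x_0=k/4^{m}$ a balanced dyadic of generation $n'$, and matching the \emph{interval} $\pi_Y(H)=\pi_Y(H')$ forces the base-heights to agree, $f(x_0)=c$ (since for fixed order the hump occupies $[f(x_0),\,f(x_0)+\height(g)/4^m]$). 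Thus every competing $H$ lives above one of the finitely many dyadic intervals $[k/4^m,(k+1)/4^m]$ with $0\le k<4^m$; there are at most $4^m$ of these, so only finitely many humps $H$ can satisfy $\pi_Y(H)=\pi_Y(H')$.

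\textbf{The main obstacle.}
The delicate point I expect to be the crux is the claim in part (i) that reflecting below-axis excursions preserves $\pi_Y(H)$ \emph{exactly}, not merely the Lebesgue length. One must verify that the vertical contribution of an excursion to the overall hump is symmetric under reflection — that is, that flipping a sub-hump about its base does not shift the \emph{range} of $y$-values the whole hump attains, only possibly relocate where within the excursion the extreme values occur. This requires care because excursions at different depths stack additively, and the global maximum and minimum of $f$ over $I(x_0)$ are attained through a particular nested sequence of excursions; I would handle it by tracking, via \eqref{eq:graph-height} and Theorem \ref{prop:height}, that each excursion's up-extent and down-extent are governed by the same record-time formula and so are exchanged symmetrically under the sign-flip, leaving $\max$ and $\min$ of the assembled hump invariant.
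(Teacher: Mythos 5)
Your overall strategy coincides with the paper's: part (i) is proved by reflecting the below-axis excursions of the path $(D_j(x_0))_{j\le 2m}$ to obtain a principal balanced dyadic rational $x_1$ with $D_j(x_1)=|D_j(x_0)|$ for all $j\le 2m$, and part (ii) follows because $\pi_Y(H)=\pi_Y(H')$ pins down the order $m$ and there are only finitely many humps of a given order. However, your justification that the reflection preserves $\pi_Y$ is where the argument goes astray, and the ``main obstacle'' you single out is a phantom. By Lemma \ref{lem:similar-copies}, the shape of $H(x_0)$ is governed entirely by the tail $(r_{2m},r_{2m+1},\dots)$ and is therefore unaffected by the reflection, which only alters the first $2m$ binary digits of the base point. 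Reflecting an excursion does \emph{not} ``replace the local sign-block by its negative'' --- the signs $r_n$ belong to $f$ and are fixed; it complements a block of digits $\eps_j$, which relocates the corresponding sub-hump to its mirror-image position \emph{left-to-right} inside its parent hump, not up-down. Consequently the fix you propose, invariance of $\height$ under a global sign-flip of $\mathbf{r}'$, is both irrelevant and insufficient: equality of heights would at best give $\lambda(\pi_Y(H))=\lambda(\pi_Y(H'))$, whereas the range of $-g$ is the reflection of the range of $g$ about $0$ and in general differs from it as an interval.

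The one fact that genuinely needs proof --- and that your write-up never cleanly establishes --- is that the base heights agree, i.e.\ $f(x_0)=f(x_1)$. This is exactly Lemma \ref{lem:equivalence}: since $|D_j(x_0)|=|D_j(x_1)|$ for every $j$ (for $j>2m$ both walks continue identically from $0$, as the remaining digits are all zero), we get $f(x_0)=f(x_1)$, and then $\pi_Y(H(x_i))=f(x_i)+4^{-m}\,\pi_Y(\mathcal{G}_g)$ with the \emph{same} $g$ for $i=0,1$, so the projections coincide. With that substitution your part (i) becomes correct and is the paper's proof. In part (ii), note also that Corollary \ref{cor:hump-intersections}(i) gives $\lambda(\pi_Y(H^t))=\frac12 4^{-m}$ only for the \emph{truncated} hump; for $H$ itself one has $\lambda(\pi_Y(H))=\height(g)\,4^{-m}\in[\frac12 4^{-m},\frac23 4^{-m}]$, but these ranges are pairwise disjoint over $m$, so the order is still determined and your finiteness count (the paper invokes the bound $\binom{2m}{m}$ from Lemma \ref{lem:hump-count}(i) rather than $4^m$) goes through.
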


\begin{proof}
(i) Let $H=H(x_0)$, where $x_0$ is a balanced dyadic rational of order $m$. There is a unique balanced dyadic rational $x_1$ of order $m$  such that $D_j(x_1)=|D_j(x_0)|$ for all $j\leq 2m$. By Lemma \ref{lem:equivalence}, $f(x_0)=f(x_1)$. By definition, $H':=H(x_1)$ is a principal hump of order $m$ and of the same generation as $H$. Hence $H'$ is the same size as $H$ and sits at the same height in the graph of $f$. Therefore, $H'$ is as required.

(ii) This is immediate from Lemma \ref{lem:hump-count}(i).
\end{proof}

\begin{lemma} \label{lem:when-finite}
Suppose $y\not\in f(\BB)$, and $l_y$ intersects only finitely many principal humps. Then $|L_f(y)|<\infty$.
\end{lemma}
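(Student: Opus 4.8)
The plan is to decompose the level set $L_f(y)$ according to the partition of $[0,1]$ into the domains $X(x_0)$ of the truncated humps, and to bound each piece by Corollary \ref{cor:hump-intersections}. Every $x\in[0,1]$ either lies in the generation-$0$ truncated hump (the graph over $X$), or falls into the interior of a unique first-generation hump interval $I(x_0)$; inside that hump one repeats the same dichotomy, and so on. Thus each $x$ either eventually lands in some set $X(x_0)$ with $x_0\in\BB$, or lies in infinitely many nested hump intervals of strictly increasing generation. For a point of the first kind, its contribution to $L_f(y)$ through a single truncated hump is exactly $l_y\cap H^t(x_0)$, and since $y\notin f(\BB)$ forces $y\notin f(\BB_{n+1})$ for every $n$, Corollary \ref{cor:hump-intersections}(ii) bounds this by $2$. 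It therefore remains to show that $l_y$ meets only finitely many truncated humps, and that no point of $L_f(y)$ is of the second (forever-descending) kind.

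First I would upgrade the hypothesis from principal humps to arbitrary humps. If $l_y$ meets a hump $H$, then $y\in\pi_Y(H)$, and by Lemma \ref{lem:leading-humps}(i) there is a principal hump $H'$ of the same order with $\pi_Y(H')=\pi_Y(H)$, so $l_y$ meets $H'$ as well. The assignment $H\mapsto H'$ maps the humps met by $l_y$ into the finite collection of principal humps met by $l_y$, and by Lemma \ref{lem:leading-humps}(ii) each fiber of this map is finite. Hence $l_y$ meets only finitely many humps in total. Since $H^t(x_0)$ sits inside the hump $H(x_0)$, the line $l_y$ can meet $H^t(x_0)$ only when it meets $H(x_0)$; consequently only finitely many truncated humps are met by $l_y$.

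The main obstacle is ruling out the forever-descending points, since the truncated humps alone do not cover these. The key observation is that such a point $x$ lies in the interiors of humps $H(x_0^{(1)}),H(x_0^{(2)}),\dots$ of generations $1,2,\dots$, which are pairwise distinct, and each of them satisfies $y=f(x)\in\pi_Y(H(x_0^{(k)}))$. Thus $l_y$ would meet infinitely many humps, contradicting the conclusion of the previous paragraph. Under the stated hypothesis, therefore, $L_f(y)$ contains no forever-descending point, and so $L_f(y)\subseteq\bigcup_{x_0\in\BB}X(x_0)$.

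Combining these steps, $L_f(y)$ equals the union, over the finitely many truncated humps met by $l_y$, of the sets $L_f(y)\cap X(x_0)$, each of cardinality at most $2$. Overlaps on the shared boundaries cause no double-counting or leakage, because those boundary points are balanced dyadic rationals and $y\notin f(\BB)$ excludes every such point from $L_f(y)$. Summing the finitely many bounds of size $2$ yields $|L_f(y)|<\infty$.
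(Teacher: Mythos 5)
Your proof is correct and rests on exactly the same ingredients as the paper's: Lemma \ref{lem:leading-humps} to pass from finitely many principal humps to finitely many humps, and Corollary \ref{cor:hump-intersections}(ii) to bound each truncated hump's contribution by $2$. The only difference is organizational --- you argue directly by decomposing $L_f(y)$ over truncated humps and explicitly excluding the ``forever-descending'' points, whereas the paper reaches the same contradiction via a maximal generation in which some hump is met infinitely often.
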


\begin{proof}
By Lemma \ref{lem:leading-humps}, $l_y$ intersects only finitely many humps.
Suppose, by way of contradiction, that $|L_f(y)|=\infty$. Then there is a maximal $n$ such that $l_y$ intersects some hump $H$ of generation $n$ in infinitely many points. But this means $|l_y\cap H^t|=\infty$, contradicting Corollary \ref{cor:hump-intersections}(ii). Hence, $|L_f(y)|<\infty$.
\end{proof}

\begin{proof}[Proof of Theorem \ref{thm:finite-ae}]
If $H$ is a principal hump of order $m$, then
\begin{equation*}
\lambda\{y: l_y\cap H\neq\emptyset\}=\lambda(\pi_Y(H))\leq \frac23\left(\frac14\right)^m,
\end{equation*}
where the inequality follows from Theorem \ref{prop:height}, since $H$ is a similar copy of the graph of some function $g\in\TT_{\pm}$, scaled by $1/4^m$. This gives
\begin{equation*}
\sum_{H\in\HH_p} \lambda\{y: l_y\cap H\neq\emptyset\} \leq \frac23\sum_{m=0}^\infty C_m\left(\frac14\right)^m<\infty
\end{equation*}
by \eqref{eq:Catalan-gf} and Lemma \ref{lem:hump-count}. Thus, by the first Borel-Cantelli lemma,
\begin{equation*}
\lambda\{y: l_y\ \mbox{intersects infinitely many $H$ in $\HH_p$}\}=0.
\end{equation*}
Because $f(\BB)$ is a countable set, Lemma \ref{lem:when-finite} gives the desired result.
\end{proof}


\begin{proof}[Proof of Theorem \ref{thm:uncountables}]
Each function $f$ in $\TT_{\pm}$ is monotone on no interval. This follows from the nowhere-differentiablility of $f$ mentioned in the Introduction, but it can also be seen directly as follows: If $x_0\in\BB_n$, then $\diam(I(x_0))\leq 4^{-n}$. So if $J$ is an arbitrary subinterval of $[0,1]$, we can choose $n$ large enough so that $\diam(J)>2\diam(I(x_0))$ for each $x_0\in\BB_n$. By Lemma \ref{lem:dense-x-union}, $J$ then contains at least one interval $I(x_0)$ with $x_0\in\BB_n$. Let $d:=\diam(I(x_0))$. Then $f(x_0)=f(x_0+d)$, but $f(x_0+d/2)=f(x_0)\pm d/2\neq f(x_0)$. Hence, $f$ is not monotone on $J$.

It was shown by Garg \cite[Theorem 1]{Garg} that for {\em any} continuous function $f$ which is monotone on no interval, the level set $L_f(y)$ is perfect for a set of $y$-values residual in the range of $f$. 
This applies in particular to $f$ in $\TT_{\pm}$.
\end{proof}



\section{Local level sets} \label{sec:local}

Lagarias and Maddock \cite{LagMad1,LagMad2} introduced the concept of a local level set of the Takagi function. Their definition makes sense also for our general setting. First, define an equivalence relation on $[0,1)$ by
\begin{equation*}
x\sim x'\quad \stackrel{\defin}{\Longleftrightarrow}\quad |D_j(x)|=|D_j(x')|\ \mbox{for each $j\in\NN$}.
\end{equation*}
The {\em local level set} containing $x$ is then defined by 
\begin{equation*}
L_x^{loc}:=\{x': x'\sim x\}.
\end{equation*}
By Lemma \ref{lem:equivalence}, each local level set is contained in some (global) level set. Note further that each local level set $L_x^{loc}$ contains a unique point $x'$ such that $D_j(x')\geq 0$ for all $j$. We call this point $x'$ the {\em principal point} of $L_x^{loc}$.

For the Takagi function, Lagarias and Maddock \cite{LagMad1} point out that each local level set is either finite or a Cantor set. This remains true in our general setting. They prove further that for the Takagi function, the average number of local level sets contained in a level set chosen at random is $3/2$. We generalize this result here to $f\in\TT_{\pm}$.
For $y\in\RR$, let $N_f^{loc}(y)\in\NN\cup\{\infty\}$ denote the number of local level sets contained in $L_f(y)$. 

\begin{lemma} \label{lem:finite-local-level-sets}
If $y\not\in f(\BB)$, then the number of finite local level sets contained in $L_f(y)$ is exactly
\begin{equation*}
|\{H\in\HH_p:y\in\pi_Y(H^t)\}|.
\end{equation*}
\end{lemma}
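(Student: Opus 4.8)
The plan is to exhibit an explicit bijection $\Phi$ between the finite local level sets contained in $L_f(y)$ and the principal humps $H$ with $y\in\pi_Y(H^t)$. The starting point is the characterization of finiteness implicit in the remark following Lemma~\ref{lem:equivalence}: a local level set $L_x^{loc}$ is finite precisely when the walk $j\mapsto D_j(x)$ vanishes for only finitely many $j$, since each index with $D_j(x)=0$ contributes an independent binary ``sign choice'' obtained by reflecting the excursion of the walk between consecutive zeros, an operation that preserves every $|D_j|$ and hence the value of $f$ by Lemma~\ref{lem:equivalence}. Because $D_j$ has the same parity as $j$, every zero occurs at an even index, so for a \emph{finite} local level set $L$ the walk of its principal point $x^*$ (the unique member with $D_j(x^*)\ge 0$ for all $j$) has a last zero at some even index $2m$. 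I would then let $x_0$ be the truncation $0.\eps_1\cdots\eps_{2m}$ of $x^*$; since $D_{2m}(x^*)=0$ and $D_j(x^*)\ge 0$ for $j\le 2m$, the point $x_0$ is a principal balanced dyadic rational of order $m$, and I set $\Phi(L):=H(x_0)\in\HH_p$. This is well defined because $x^*$, and hence $m$ and $x_0$, are determined by $L$.

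Next I would check that $\Phi$ lands in the claimed set. Since $2m$ is the last zero of the walk of $x^*$, that walk does not return to $0$ after time $2m$; by the description of $X(x_0)$ this places $x^*$ in $X(x_0)$ rather than in the interior of any deeper balanced interval. Consequently $y=f(x^*)\in f(X(x_0))=\pi_Y(H^t(x_0))$, so $\Phi(L)$ indeed belongs to $\{H\in\HH_p:y\in\pi_Y(H^t)\}$.

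The heart of the argument is to prove that $\Phi$ is a bijection, and here the hypothesis $y\notin f(\BB)$ together with Corollary~\ref{cor:hump-intersections}(ii) do the work. Fix a principal hump $H(x_0)$ of order $m$ with $y\in\pi_Y(H^t)$. Every point of $X(x_0)\cap L_f(y)$ is non-balanced (as $y\notin f(\BB)$), so its walk never returns to $0$ after $2m$; thus each such point lies in a finite local level set and has last zero exactly at $2m$. By Corollary~\ref{cor:hump-intersections}(ii) there are at most two such points, and the crucial observation is that, when there are two, they are mirror images under the left-to-right symmetry of the hump. This reflection corresponds to flipping all binary digits beyond position $2m$, which negates $D_j$ for $j>2m$ while preserving every $|D_j|$; hence the two points are $\sim$-equivalent and lie in the \emph{same} local level set, exactly one of them (the one with positive tail) being principal with truncation $x_0$. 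Therefore each principal hump in the target set is hit by exactly one finite local level set, which yields surjectivity and injectivity simultaneously, since $x_0$ being principal makes it the unique principal word with the prescribed pattern of $|D_j|$ on $[1,2m]$.

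I expect the main obstacle to be this bijectivity step. One must argue carefully that the (up to) two intersection points of $l_y$ with a truncated principal hump always belong to a single local level set and determine exactly one principal point, and that the reverse map from finite local level sets recovers precisely the truncation $x_0$. The verification that membership in $X(x_0)$ is equivalent to the walk not returning to $0$ after time $2m$---which relies on $y\notin f(\BB)$ to exclude the balanced endpoints of the deeper intervals---is the technical crux underpinning all of these identifications.
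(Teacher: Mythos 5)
Your proposal is correct and follows essentially the same route as the paper: you map each finite local level set to the principal hump determined by truncating its principal point at the last zero of the walk, and you establish bijectivity via Corollary~\ref{cor:hump-intersections}(ii) together with the fact that the (at most) two intersection points of $l_y$ with a truncated principal hump lie in the same local level set. Your write-up merely makes explicit two details the paper leaves implicit, namely that the two intersection points are mirror images under the hump's symmetry (hence $\sim$-equivalent) and that $y\notin f(\BB)$ is what forces points of $X(x_0)\cap L_f(y)$ to have no zeros of the walk after time $2m$.
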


\begin{proof}
We show that for each $y\not\in f(\BB)$, there is a bijection between the collection of finite local level sets in $L_f(y)$ and the collection of truncated principal humps which intersect the line $l_y$. The lemma will follow from this correspondence.

Let $L_x^{loc}\subset L_f(y)$ be a finite local level set with principal point $x$. Then there exists $m\in\ZZ_+$ such that $D_{2m}(x)=0$, but $D_j(x)>0$ for all $j>2m$. (Otherwise, $L_x^{loc}$ would be a Cantor set.) Let $x_0$ be the dyadic rational $k/2^{2m}$ whose first $2m$ binary digits coincide with those of $x$. Then $x_0$ is balanced of order $m$, $H(x_0)$ is a principal hump, and $(x,y)\in H^t(x_0)$. Thus, we have a mapping from finite local level sets in $L_f(y)$ to truncated principal humps which intersect $l_y$. This mapping is bijective. It is onto, because given a truncated principal hump $H^t$ which intersects $l_y$, let $x$ be any point such that $(x,y)\in H^t\cap l_y$; then the local level set $L_x^{loc}$ is finite and gets mapped to $H^t$ (even though $x$ may not be the principal point of $L_x^{loc}$). The mapping is one-to-one, because any truncated hump $H^t$ can have only two points of intersection with $l_y$ by Corollary \ref{cor:hump-intersections}(ii), and the abscissae of both points belong to the same local level set.
\end{proof}

\begin{theorem} \label{thm:local-level-sets}
The average number of local level sets contained in a level set $L_f(y)$ with $y$ chosen at random from the range of $f$ is given by
\begin{equation*}
\bar{N}_f^{loc}:=h_f^{-1}\int_\RR N_f^{loc}(y)\,dy=h_f^{-1},
\end{equation*}
where $h_f=\height(f)$. In particular, $3/2\leq\bar{N}_f^{loc}\leq 2$.
\end{theorem}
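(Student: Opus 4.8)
The plan is to compute the integral $\int_\RR N_f^{loc}(y)\,dy$ by splitting $N_f^{loc}(y)$ according to whether the constituent local level sets are finite or infinite (Cantor sets). Since $f(\BB)$ is countable and hence $\lambda$-null, I may integrate over $y\notin f(\BB)$ without affecting the integral. For such $y$, Theorem~\ref{thm:finite-ae} tells us that $L_f(y)$ is finite $\lambda$-almost everywhere, so the number of \emph{infinite} local level sets is zero for a.e.\ $y$; thus the contribution of Cantor-type local level sets to the integral vanishes, and $N_f^{loc}(y)$ equals the number of \emph{finite} local level sets for a.e.\ $y$. This is where Lemma~\ref{lem:finite-local-level-sets} becomes the crucial tool.

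First I would invoke Lemma~\ref{lem:finite-local-level-sets} to write, for a.e.\ $y$,
\begin{equation*}
N_f^{loc}(y)=\bigl|\{H\in\HH_p: y\in\pi_Y(H^t)\}\bigr|
=\sum_{H\in\HH_p}\mathbf{1}_{\pi_Y(H^t)}(y).
\end{equation*}
Then, integrating over $\RR$ and applying the monotone convergence theorem (or Tonelli) to interchange the sum and the integral, I obtain
\begin{equation*}
\int_\RR N_f^{loc}(y)\,dy=\sum_{H\in\HH_p}\lambda\bigl(\pi_Y(H^t)\bigr).
\end{equation*}
Now I would organize the sum over principal humps by their order $m$. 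By Corollary~\ref{cor:hump-intersections}(i), each truncated hump $H^t$ of order $m$ satisfies $\lambda(\pi_Y(H^t))=\tfrac12(1/4)^m$, and by Lemma~\ref{lem:hump-count}(ii) there are exactly $C_m$ principal humps of order $m$. Hence the sum becomes $\sum_{m=0}^\infty C_m\cdot\tfrac12(1/4)^m$.

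The final step is a direct evaluation using the Catalan generating function \eqref{eq:Catalan-gf}, which gives $\sum_{m=0}^\infty C_m(1/4)^m=2$, so that $\int_\RR N_f^{loc}(y)\,dy=\tfrac12\cdot 2=1$. Dividing by $h_f$ yields $\bar{N}_f^{loc}=h_f^{-1}$ as claimed, and the bounds $3/2\leq\bar{N}_f^{loc}\leq 2$ follow immediately from the height bounds $1/2\leq h_f\leq 2/3$ established in Theorem~\ref{prop:height}. The main obstacle I anticipate is the careful justification that the Cantor-type local level sets contribute nothing to the integral: one must argue that the set of $y$ for which $L_f(y)$ contains an infinite local level set is $\lambda$-null. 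This follows because an infinite local level set forces $D_n(x)=0$ for infinitely many $n$ (yielding a Cantor set in $L_f(y)$ by the remark after Lemma~\ref{lem:equivalence}), which in particular makes $L_f(y)$ uncountable; but Theorem~\ref{thm:finite-ae} guarantees $L_f(y)$ is finite for a.e.\ $y$, so such $y$ form a null set. Once this measure-zero exceptional set is disposed of, the remaining computation is entirely mechanical.
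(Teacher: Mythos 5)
Your proposal is correct and follows essentially the same route as the paper: reduce to finite local level sets via Theorem \ref{thm:finite-ae}, apply Lemma \ref{lem:finite-local-level-sets} to identify $N_f^{loc}(y)$ with a count of truncated principal humps, interchange sum and integral, and evaluate via Corollary \ref{cor:hump-intersections}(i), Lemma \ref{lem:hump-count}(ii), and the Catalan generating function \eqref{eq:Catalan-gf}. The extra care you take in justifying that Cantor-type local level sets contribute nothing is exactly the (brief) opening observation of the paper's proof.
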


\begin{proof}
Since $L_f(y)$ is finite for almost every $y$, we need only consider {\em finite} local level sets. Using Lemma \ref{lem:finite-local-level-sets} and the fact that $f(\BB)$ is a Lebesgue null set, it follows that the mapping $y \mapsto N_f^{loc}(y)$ is Lebesgue measurable, and by Fubini's theorem,
\begin{equation*}
\int_\RR N_f^{loc}(y)\,dy=\sum_{H\in\HH_p} \lambda(\pi_Y(H^t))
=\frac12\sum_{m=0}^\infty C_m{\left(\frac14\right)}^m=1.
\end{equation*}
Here the second equality follows by Lemma \ref{lem:hump-count}(ii) and Corollary \ref{cor:hump-intersections}(i), and the third by \eqref{eq:Catalan-gf}. The last statement of the theorem is a consequence of Theorem \ref{prop:height}.
\end{proof}

\section*{Acknowledgments}
The author is greatly indebted to the referee for pointing out Garg's paper \cite{Garg}, thereby saving Theorem \ref{thm:uncountables} (even slightly strengthening the original), after the original argument was found to be erroneous during the examination of the galley proofs.

\footnotesize

\end{document}